%
%
%
%
%
%
\RequirePackage{fix-cm}
\documentclass[smallextended]{svjour3}       
\smartqed  
\usepackage{graphicx}
%
%
\usepackage{amsmath,amssymb}
\usepackage{color}
\usepackage{multirow}
\usepackage{booktabs}
\usepackage{url}
%
%
%
\begin{document}

\title{New semidefinite relaxations for a class of complex quadratic programming problems\thanks{This preprint has not undergone peer review (when applicable) or any post-submission improvements or corrections. The Version of Record of this article is published in Journal of Global Optimization, and is available online at https://doi.org/10.1007/s10898-023-01290-z.}
}

\titlerunning{New semidefinite relaxations for complex quadratic program}        

\author{Yingzhe Xu \and
        Cheng Lu \and
        Zhibin Deng \and
        Ya-Feng Liu
}


\institute{Yingzhe Xu \at
              School of Economics and Management, North China Electric Power University, Beijing 102206, China\\
              \email{xuyingzhe1@163.com}
           \and
           Cheng Lu \at
              School of Economics and Management, North China Electric Power University, Beijing 102206, China\\
              \email{lucheng1983@163.com}
           \and
           Zhibin Deng \at
              School of Economics and Management, University of Chinese Academy of Sciences; MOE Social Science Laboratory of Digital Economic Forecasts and Policy Simulation at UCAS, Beijing, 100190, China\\
              \email{zhibindeng@ucas.ac.cn}
           \and
           Ya-Feng Liu \at
              State Key Laboratory of Scientific and Engineering Computing, Institute of Computational Mathematics and Scientific/Engineering Computing, Academy of Mathematics and
              Systems Science, Chinese Academy of Sciences, Beijing, 100190, China\\
              \email{yafliu@lsec.cc.ac.cn}
}

\date{Received: date / Accepted: date}

\maketitle

\begin{abstract}
In this paper, we propose some new semidefinite relaxations for a class of nonconvex complex quadratic programming problems, which widely appear in the areas of signal processing and power system. By deriving new valid constraints to the matrix variables in the lifted space, we derive some enhanced semidefinite relaxations of the complex quadratic programming problems. Then, we compare the proposed semidefinite relaxations with existing ones, and show that the newly proposed semidefinite relaxations could be strictly tighter than the previous ones. Moreover, the proposed semidefinite relaxations can be applied to more general cases of complex quadratic programming problems, whereas the previous ones are only designed for special cases. Numerical results indicate that the proposed semidefinite relaxations not only provide tighter relaxation bounds, but also improve some existing approximation algorithms by finding better sub-optimal solutions.
\keywords{Quadratic optimization \and Semidefinite relaxation \and Approximation algorithm \and Phase quantized waveform design \and Discrete transmit beamforming}
\end{abstract}

\section{Introduction}
In this paper, we consider the following complex quadratic programming problem:
\begin{equation}\label{CQP}\tag{CQP}
\begin{aligned}
\min~&x^{\dag} Q_{0} x\\
\textrm{s.t.} ~&x^{\dag} Q_{i} x \leq b_{i},~i=1, \ldots, m, \\
&l_i \leq {|}x_i {|} \leq u_i,~ i=1, \ldots, n, \\
&\arg(x_i{x_j}^{\dag}) \in \mathcal{A}_{ij}, \{i,j\}\in \mathcal{E},
\end{aligned}
\end{equation}
where $Q_0,Q_1,\ldots,Q_m\in \mathbb{C}^{n\times n}$ are Hermitian matrices, $b_1,\ldots,b_m\in \mathbb{R}$, and $l_i$ and $u_i$ are the lower and upper bounds of the modulus of $x_i\in \mathbb{C}$ for $i=1,\ldots,n$. The set $\mathcal{E}$ is a subset of $ \{\{i,j\}\,|\,1\leq i<j\leq n\}$. For each $\{i,j\}\in \mathcal{E}$, $\mathcal{A}_{ij}$ is either an interval of the form $[\underline{\theta}_{ij},\bar{\theta}_{ij}]$, or a set of discrete points of the form $\{\theta_{ij}^1,\ldots,\theta_{ij}^M\}\subseteq \mathbb{R}$. We use $\arg(\cdot)$ to denote the argument of a complex variable, and $(\cdot)^\dag$ to denote the conjugate transpose of a matrix/vector.

Problem \eqref{CQP} arises in many important applications in signal processing, communication and power system. For example, the radar waveform design problem \cite{Maio2011}, the transmit beamforming problem \cite{Demir2014,Demir2015}, and the optimal power flow problem \cite{Low2014a,Low2014b}, can all be formulated as special cases of \eqref{CQP}. Besides, when $l_i=u_i=1$ for $i=1,\ldots,n$, \eqref{CQP} degenerates to the unit-modulus constrained quadratic programming problem, which arises in applications including the MIMO detection problem \cite{Ma2004}, the radar phase code design problem \cite{Maio2009}, the angular synchronization problem \cite{Bandeira}, and the max-3-cut problem \cite{Goemans}. In some applications arising in network problems, such as the optimal power flow problem in the electricity network \cite{Chen2016,Coffrin}, the set $\mathcal{E}$ can be seen as an edge set that represents the network structure. Besides, the constraint $\arg(x_i{x_j}^{\dag}) \in \mathcal{A}_{ij}$ in \eqref{CQP}, which bounds the difference of phase angles, has some physical meanings in power systems and other related applications. One may refer to \cite{Coffrin} for further discussions on the physical meaning of this constraint.

Problem \eqref{CQP} is NP-hard in general, since some of its special cases are already known to be NP-hard \cite{Zhang2006}. Hence, it is not possible to solve \eqref{CQP} globally in polynomial-time complexity, unless P=NP. Instead, we are interested in designing efficient algorithms to find sub-optimal solutions of \eqref{CQP}. In the literature, most existing sub-optimal algorithms for solving \eqref{CQP} or its subclass problems are approximation algorithms, local-optimization algorithms, or heuristic algorithms \cite{Demir2014,Demir2015,Goemans,Maio2009,Maio2011,So2007,Waldspurger,Zhang2006,Zhao}. Among different sub-optimal algorithms, the semidefinite relaxation based approximation algorithms have attracted great attention since the pioneering work of Goemans and Williamson \cite{Goemans1995}. One may refer to \cite{Luo2010} for a comprehensive survey on the applications of semidefinite relaxation in signal processing, and \cite{Low2014a,Low2014b} for the applications in power system. For certain type of \eqref{CQP} which arises in real applications, it has been proven that under some practical conditions, the probability of obtaining the global solution of the problem using the semidefinite relaxation based approximation algorithms can be very high \cite{Bandeira,Low2014b,Lu2019}.

However, 
when the phase angle constraints $\arg(x_i{x_j}^{\dag}) \in \mathcal{A}_{ij}$ are presented in \eqref{CQP}, the current existing semidefinite relaxations are generally not tight, especially when $\mathcal{A}_{ij}$ is a discrete set. The reason is that these phase angle constraints are usually dropped when deriving semidefinite relaxations of \eqref{CQP}. Indeed, by exploiting the phase angle constraints, we may derive some new valid constraints to enhance the semidefinite relaxation. A theoretical result has been presented in \cite{Lu2019}, which shows that by introducing new valid constraints derived from the phase angle constraints, we may design an improved semidefinite relaxation for the MIMO detection problem. The tightness of the improved semidefinite relaxation can be guaranteed under certain conditions. Similar enhanced semidefinite relaxations have also been designed for other types of complex quadratic programming problems that appear in signal processing \cite{Lu2018,Lu2020}.

In \cite{Lu2018} and \cite{Lu2020}, Lu et al. have proposed a method of representing a complex variable by its polar coordinate form to derive tight semidefinite relaxations. The main idea behind the method is that some valid constraints can be easily derived in terms of polar coordinate variables, while it is hard to discover them in the complex coordinate variables. Based on this method, Lu et al. have proposed some improved semidefinite relaxations for several classes of complex quadratic optimization problems in \cite{Lu2018,Lu2020}. However, for problem \eqref{CQP} that contains the constraints $\arg(x_i{x_j}^{\dag}) \in \mathcal{A}_{ij}$, the previous semidefinite relaxations proposed in \cite{Lu2018,Lu2020} are not always applicable.

In this paper, we propose a new semidefinite relaxation, which is tighter and more flexible than the existing ones in \cite{Lu2018,Lu2020}. Our main idea is to lift the variable $x$ to a matrix $X=xx^\dag$, and exploit the valid inequalities on the matrix entries $X_{ii}$, $X_{jj}$ and $X_{ij}$ to enhance the tightness of a semidefinite relaxation. The contributions of this paper are two folds:
\begin{itemize}
\item From the theoretical aspect, we exactly describe the convex hull of the set $$\mathcal{J}^x_{ij}:=\{(x_i x^\dag_i,x_j x^\dag_j,x_i x^\dag_j)\,|\, ~l_i \leq {|}x_i {|} \leq u_i,l_j \leq {|}x_j {|} \leq u_j,\arg(x_i{x_j}^{\dag}) \in \mathcal{A}_{ij} \}.$$ It turns out that the convex hull of $\mathcal{J}^x_{ij}$ can be represented as a finite number of linear constraints and no more than two second-order cone constraints. Our theoretical result can be applied to general cases of \eqref{CQP}, including the case of $\mathcal{A}_{ij}$ being an interval, and the case of being a finite set. This new result generalizes the previous one in \cite{Chen2016}, in which the convex hull of $\mathcal{J}^x_{ij}$ is described only for the case $\mathcal{A}_{ij}$ being an interval contained in  $(-\pi/2,\pi/2)$.
\item From the computational aspect, based on the formulation of the convex hull of $\mathcal{J}^x_{ij}$, we introduce new valid inequalities on variables $X_{ii}$, $X_{jj}$ and $X_{ij}$ to design some enhanced semidefinite relaxations. Our results show that the enhanced semidefinite relaxations can be tighter than the existing semidefinite relaxations in the literature. Moreover, by adopting the enhanced semidefinite relaxations, some previous approximation algorithms can be improved to find better sub-optimal solutions.
\end{itemize}

The remaining parts of this paper are organized as follows: Section 2 introduces the first enhanced semidefinite relaxation, and analyzes some basic properties of the proposed semidefinite relaxation. Section 3 compares the proposed enhanced semidefinite relaxation with existing ones in the literature. Section 4 proposes the second enhanced semidefinite relaxation which is further enhanced from the one proposed in Section 2, and shows that the new semidefinite relaxation can be strictly tighter than the one proposed in \cite{Chen2016}. Section 5 presents the numerical results to show the performance of the proposed semidefinite relaxations.

The following notations will be adopted throughout the paper: For a given matrix $X\in \mathbb{C}^{n\times n}$, $\texttt{Re}(X)$
and $\texttt{Im}(X)$ denote its component-wise real part and imaginary part, respectively. For a given Hermitian matrix
$A\in \mathbb{C}^{n\times n}$, $A\succeq 0$ means $A$ is positive semidefinite. For two given Hermitian matrices $A$ and $B$, $A\succeq B$
means $A-B\succeq 0$. Moreover, $\texttt{Trace}(A)$ denotes the trace of $A$, $\texttt{rank}(A)$ denotes the rank of $A$, and $A\cdot B$ denotes $\texttt{Trace}(A^{\dag}B)$.
For a set $\mathcal{S}$ in some vector space, we use $\textsf{Conv} (\mathcal{S})$ to represent the convex hull of $\mathcal{S}$.
We use $$\mathcal{A}^M:=\left\{0,\frac{1}{M}2\pi,\ldots,\frac{M-1}{M}2\pi\right\}$$
to represent the uniformly discretized phase angle set for $M\geq 2$. Besides, with a slight abuse of notations, for a nonzero complex variable $z$ and a set $\mathcal{A}\subset \mathbb{R}$,
the notation $\arg(z)\in \mathcal{A}$ means that there exists a $k\in \mathbb{Z}$ such that $\arg(z)+2k\pi$ is contained in $\mathcal{A}$.

\section{A new semidefinite relaxation}
In this section, we first present a classical semidefinite relaxation for problem \eqref{CQP}, and then enhance it by introducing some new valid constraints. Letting $X=xx^{\dag}$, problem
\eqref{CQP} can be reformulated as follows:
\begin{equation}\tag{CQP2}\label{CQP2}
\begin{aligned}
\min~& Q_{0}\cdot X\\
\textrm{s.t.} ~&Q_{i}\cdot X \leq b_{i},~ i=1, \ldots, m, \\
&l_i^2 \leq  X_{ii} \leq u^2_i,~  i=1, \ldots, n, \\
&\arg(X_{ij}) \in \mathcal{A}_{ij}, \{i,j\}\in \mathcal{E},\\
&\texttt{rank}(X)=1.
\end{aligned}
\end{equation}
Dropping the constraints $\arg(X_{ij}) \in \mathcal{A}_{ij}$ for $\{i,j\}\in \mathcal{E}$ and relaxing $\texttt{rank}(X)=1$ to $X\succeq 0$, we have the following classical semidefinite
relaxation:
\begin{equation}\tag{CSDP}\label{CSDP}
\begin{aligned}
\min~& Q_{0}\cdot X\\
\textrm{s.t.} ~&Q_{i}\cdot X \leq b_{i},~ i=1, \ldots, m, \\
&l_i^2 \leq  X_{ii} \leq u^2_i, ~i=1, \ldots, n, \\
&X\succeq 0.
\end{aligned}
\end{equation}
\eqref{CSDP} has been widely used to design approximation algorithms in the literature. For example, Maio et al. have applied \eqref{CSDP} to design an approximation algorithm for solving the radar waveform design problem \cite{Maio2011}. Zhang and Huang \cite{Zhang2006}, and So et al. \cite{So2007}, have applied \eqref{CSDP} in the special case, where $l_i=u_i=1$ for $i=1,\ldots,n$, to design approximation algorithms for the unit-modulus constrained complex quadratic optimization.

However, since the constraints $\arg(X_{ij}) \in \mathcal{A}_{ij}$ are dropped directly, the bound provided by \eqref{CSDP} may not be tight enough for some classes of \eqref{CQP}. As discussed in \cite{Lu2018,Lu2019,Lu2020}, by exploiting the structure of the phase angle constraints, we may derive new valid inequalities to enhance the semidefinite relaxation.

For this purpose, we introduce the polar coordinate form for each complex variable $x_i=r_i e^{\texttt{i}\theta_i}$.
Then, we introduce two lifted matrices, including $X=xx^{\dag}\in \mathbb{C}^{n\times n}$, and $R=rr^{\dag}\in \mathbb{R}^{n\times n}$. For each $\{i,j\}\in \mathcal{E}$, we
have the following equations:
\begin{equation}\label{eq1a}
R_{ij}=r_i r_j,~X_{ij}=r_i r_j e^{\texttt{i}(\theta_i-\theta_j)}=R_{ij}e^{\texttt{i}(\theta_i-\theta_j)}.
\end{equation}
Based on \eqref{eq1a}, we may further derive the following equations:
\begin{equation}\label{eq5}
R_{ii}R_{jj}=R^2_{ij},~|X_{ij}|=R_{ij}.
\end{equation}
Adding \eqref{eq5} into \eqref{CSDP}, we have the following problem:
\begin{equation}\tag{CQP3}\label{CQP3}
\begin{aligned}
\min~& Q_{0}\cdot X\\
\textrm{s.t.} ~&Q_{i}\cdot X \leq b_{i}, ~ i=1, \ldots, m, \\
&l_i^2 \leq  X_{ii}=R_{ii} \leq u^2_i, ~ i=1, \ldots, n, \\
&\arg(X_{ij}) \in \mathcal{A}_{ij},~|X_{ij}|=R_{ij},~R_{ij}^2=R_{ii}R_{jj},~ \{i,j\}\in  \mathcal{E},\\
&X\succeq 0.
\end{aligned}
\end{equation}
Notice that constraints $|X_{ij}|=R_{ij}$ and $R_{ij}^2=R_{ii}R_{jj}$ are nonconvex in \eqref{CQP3}.
Hence, we need to relax these nonconvex constraints in order to obtain a convex relaxation. To do this, we define the following set for each $\{i,j\}\in \mathcal{E}$:
\begin{equation}
\mathcal{H}_{ij}=\{(R_{ii},R_{jj},R_{ij}) \mid l_i^2 \leq  R_{ii} \leq u^2_i,l_j^2 \leq  R_{jj} \leq u^2_j, R_{ij}^2=R_{ii}R_{jj}\}.
\end{equation}
We have the following theorem, which is cited from Corollary 5 in \cite{Chen2016}, with some modifications on notations.
\begin{theorem}[\cite{Chen2016}] \label{thm1}
The following two linear inequalities are valid for $\mathcal{H}_{ij}$:
\begin{align}
&(l_i+u_i)(l_j+u_j)R_{ij}\geq (l_j^2+l_j u_j)R_{ii}+(l_i^2 +l_i u_i)R_{jj}+l_i l_j u_i u_j -l_i^2 l_j^2, \label{eq1} \\
&(l_i+u_i)(l_j+u_j)R_{ij}\geq (u_j^2+l_j u_j)R_{ii}+(u_i^2 +l_i u_i)R_{jj}+l_i l_j u_i u_j -u_i^2 u_j^2. \label{eq2}
\end{align}
Moreover, the convex hull of $\mathcal{H}_{ij}$ can be represented as follows:
\begin{align}
\textsf{Conv}(\mathcal{H}_{ij})=\left\{ (R_{ii},R_{jj},R_{ij}) \,\left|\,
\begin{array}{@{}lll} l_i^2 \leq  R_{ii} \leq u^2_i,
l_j^2 \leq  R_{jj} \leq u^2_j\\ R_{ij}^2\leq R_{ii}R_{jj}\\
(R_{ii},R_{jj},R_{ij})~\textrm{satisfies \eqref{eq1} and \eqref{eq2}}\end{array}\right. \right\}.
\end{align}
\end{theorem}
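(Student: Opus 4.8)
The plan is to reparametrize $\mathcal{H}_{ij}$ through the moduli. Writing $R_{ii}=r_i^2$, $R_{jj}=r_j^2$, $R_{ij}=r_i r_j$ with $(r_i,r_j)\in[l_i,u_i]\times[l_j,u_j]=:\mathcal{R}$, the set $\mathcal{H}_{ij}$ is exactly the graph over the box $\mathcal{B}:=[l_i^2,u_i^2]\times[l_j^2,u_j^2]$ of the function $\phi(s,t):=\sqrt{st}$, which is \emph{concave} on the nonnegative orthant. This is the structural fact that drives the whole argument: the upper envelope of $\textsf{Conv}(\mathcal{H}_{ij})$ will be the graph of $\phi$ itself (by Jensen's inequality), while its lower envelope will be the \emph{convex envelope} of the concave function $\phi$ over $\mathcal{B}$, which is piecewise linear and determined by the four corner values. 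I would first prove the two inequalities, then identify these two envelopes, and finally read off that they coincide with the claimed description of $\textsf{Conv}(\mathcal{H}_{ij})$.

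For validity, let $f(r_i,r_j)$ denote the left side minus the right side of \eqref{eq1} after substituting $R_{ii}=r_i^2$, $R_{jj}=r_j^2$, $R_{ij}=r_i r_j$; I must show $f\ge 0$ on $\mathcal{R}$. The coefficients of $r_i^2$ and $r_j^2$ in $f$ are $-l_j(l_j+u_j)\le 0$ and $-l_i(l_i+u_i)\le 0$, so $f$ is \emph{separately concave}. Hence for each fixed $r_i$ the minimum over $r_j$ is attained at $r_j\in\{l_j,u_j\}$, reducing the claim to the two edges $r_j=l_j$ and $r_j=u_j$. A short computation shows $f$ vanishes at the three corners $(l_i,l_j)$, $(u_i,l_j)$, $(l_i,u_j)$ and equals $(u_i^2-l_i^2)(u_j^2-l_j^2)\ge 0$ at $(u_i,u_j)$; on each edge the concave one-variable quadratic is therefore nonnegative, either factoring as a negative multiple of $(r-l)(r-u)$ or lying above the chord joining its nonnegative endpoint values. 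This gives \eqref{eq1}, and \eqref{eq2} follows from the symmetric computation obtained by the exchange $l\leftrightarrow u$.

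For the inclusion $\textsf{Conv}(\mathcal{H}_{ij})\subseteq\mathcal{P}$, where $\mathcal{P}$ is the set on the right-hand side of the claimed identity, it suffices to note that $\mathcal{P}$ is convex and contains $\mathcal{H}_{ij}$: the box constraints and \eqref{eq1}, \eqref{eq2} are linear, and $R_{ij}^2\le R_{ii}R_{jj}$ with $R_{ii},R_{jj}\ge 0$ is convex, being the positive semidefiniteness of the $2\times 2$ Hermitian principal submatrix. Membership $\mathcal{H}_{ij}\subseteq\mathcal{P}$ is immediate from $R_{ij}^2=R_{ii}R_{jj}$ together with the validity just proved. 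For the reverse inclusion I would describe both sets by their upper and lower envelopes over $\mathcal{B}$. Any $(s,t,w)\in\textsf{Conv}(\mathcal{H}_{ij})$ is a convex combination $\sum_k\lambda_k(s_k,t_k,\sqrt{s_k t_k})$, so $w=\sum_k\lambda_k\sqrt{s_k t_k}\le\sqrt{st}$ by concavity of $\phi$, while the graph point $(s,t,\sqrt{st})$ itself lies in $\mathcal{H}_{ij}$; hence the upper envelope is exactly $w=\sqrt{st}$, matching the constraint $R_{ij}^2\le R_{ii}R_{jj}$ on the region where $w\ge 0$.

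The crux is the lower envelope. Here I would use that the convex envelope of a concave function over a polytope is the piecewise-linear interpolant of its vertex values: in any representation $\sum_k\lambda_k(s_k,t_k)=(s,t)$ one may push each $(s_k,t_k)$ out to the corners of $\mathcal{B}$, and concavity of $\phi$ only \emph{lowers} the resulting value of $\sum_k\lambda_k\phi$, so the minimal attainable $w$ equals the lower hull of the four lifted corners $(l_i^2,l_j^2,l_i l_j)$, $(u_i^2,l_j^2,u_i l_j)$, $(l_i^2,u_j^2,l_i u_j)$, $(u_i^2,u_j^2,u_i u_j)$. This lower hull consists of two triangular faces sharing the anti-diagonal through $(u_i^2,l_j^2,u_i l_j)$ and $(l_i^2,u_j^2,l_i u_j)$; their supporting planes are exactly \eqref{eq1} and \eqref{eq2}, and the sign checks from the validity step—the remaining corner lying above each plane, strictly so when $l_i<u_i$ and $l_j<u_j$—confirm that this is the downward rather than upward triangulation. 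Consequently $w\ge\max\{p_1(s,t),p_2(s,t)\}$, where $p_1,p_2$ are the affine right-hand sides of \eqref{eq1}, \eqref{eq2}; since these interpolate the nonnegative corner values they force $w\ge 0$, so the two-sided description $\max\{p_1,p_2\}\le w\le\sqrt{st}$ over $\mathcal{B}$ is precisely $\mathcal{P}$. I expect the identification of the convex envelope with the correct corner triangulation—justifying the reduction to vertices and selecting the diagonal via the remaining-corner sign test—to be the main obstacle, whereas the separately-concave reduction in the validity step and the Jensen bound for the upper envelope are comparatively routine.
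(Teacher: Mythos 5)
The paper never actually proves Theorem~\ref{thm1}: it is imported, with notational changes only, from Corollary~5 of \cite{Chen2016}, so there is no internal proof to compare your argument against; I can only judge it on its own merits, and on those it is correct. Your parametrization of $\mathcal{H}_{ij}$ as the graph of the concave function $\phi(s,t)=\sqrt{st}$ over the box $[l_i^2,u_i^2]\times[l_j^2,u_j^2]$ is sound, and your corner evaluations check out: with $f$ the slack of \eqref{eq1}, one indeed gets $f=0$ at $(l_i,l_j)$, $(u_i,l_j)$, $(l_i,u_j)$ and $f=(u_i^2-l_i^2)(u_j^2-l_j^2)\ge 0$ at $(u_i,u_j)$, and symmetrically for \eqref{eq2}. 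These computations then correctly do the triple duty you assign them: combined with separate concavity they prove validity; they identify \eqref{eq1} and \eqref{eq2} as the affine interpolants of the two lifted corner triples; and they certify that the anti-diagonal triangulation is the lower hull, so the convex envelope of $\phi$ over the box equals $\max\{p_1,p_2\}$, which together with the Jensen bound for the upper envelope gives exactly the claimed description. This is a genuine, self-contained derivation of a result the paper only cites, which is a useful addition.

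Two points should be made explicit in a polished version. First, you silently read $R_{ij}\ge 0$ into the definition of $\mathcal{H}_{ij}$ (i.e., $R_{ij}=r_ir_j$ with $r_i,r_j\ge 0$). This is the intended reading, since $R=rr^\dag$ for a vector $r$ of moduli, and it is also necessary: under the literal constraint $R_{ij}^2=R_{ii}R_{jj}$ the point $(l_i^2,l_j^2,-l_il_j)$ with $l_il_j>0$ would belong to $\mathcal{H}_{ij}$ and violate \eqref{eq1}, so the theorem as literally stated would be false. Second, the inclusion of the right-hand side set (your $\mathcal{P}$) into $\textsf{Conv}(\mathcal{H}_{ij})$ should close with the explicit segment argument: given $(s,t,w)$ with $\max\{p_1(s,t),p_2(s,t)\}\le w\le\sqrt{st}$, the upper point $(s,t,\sqrt{st})$ lies in $\mathcal{H}_{ij}$ itself, the lower point $(s,t,\max\{p_1,p_2\}(s,t))$ is a convex combination of at most three lifted corners of the box, each of which lies in $\mathcal{H}_{ij}$, and $(s,t,w)$ lies on the segment joining these two points. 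These are routine completions of steps you flagged as standard, not gaps in the idea.
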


The inequality $R_{ij}^2\leq R_{ii}R_{jj}$ can be transformed to $(2R_{ij}^2+R_{ii}^2+R_{jj}^2)^{1/2}\leq R_{ii}+R_{jj}$.
Hence, the constraint $(R_{ii},R_{jj},R_{ij})\in \textsf{Conv}(\mathcal{H}_{ij})$ can be formulated as a set of linear constraints and a second-order cone constraint.

Next, for any given $R_{ij}\geq 0$, we consider the nonconvex set $$\mathcal{G}_{ij}(R_{ij}):=\{ X_{ij}\mid R_{ij}=|X_{ij}|,~ \arg (X_{ij})\in \mathcal{A}_{ij} \}$$
and its convex hull. When $R_{ij}=0$, we simply define $\mathcal{G}_{ij}(R_{ij})=\{0\}$. When $R_{ij}>0$, by using the similar arguments in \cite{Lu2018}, we have the next two propositions to describe the convex hull of the set $\mathcal{G}_{ij}(R_{ij})$.

\begin{proposition} \label{prop1}
For the case of $\mathcal{A}_{ij}=[\underline {\theta}_{ij},\bar\theta_{ij}]$ where $\bar\theta_{ij}-\underline {\theta}_{ij} < 2\pi,$ we have
\begin{equation}\label{eqgij}
\textsf{Conv} (\mathcal{G}_{ij}(R_{ij}))=\left\{X_{ij}\,|\, a_{ij} \mathrm{Re}\left(X_{ij}\right)+ b_{ij} \mathrm{Im}\left(X_{ij}\right)\geq c_{ij}R_{ij},~|X_{ij}|\leq R_{ij} \right\},
\end{equation}
where
\begin{equation}\label{thm2eq1}
a_{ij}=\cos\left(\frac{\underline {\theta}_{ij}+\bar\theta_{ij}}{2}\right),~b_{ij}=\sin\left(\frac{\underline {\theta}_{ij}+\bar\theta_{ij}}{2}\right), \textrm{and}~c_{ij}=\cos\left(\frac{\bar\theta_{ij}-\underline {\theta}_{ij}}{2}\right).
\end{equation}
\end{proposition}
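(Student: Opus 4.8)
The plan is to fix $\{i,j\}\in\mathcal{E}$ and treat the nontrivial case $R_{ij}>0$ (when $R_{ij}=0$ both sides of \eqref{eqgij} reduce to $\{0\}$). Writing $\psi_{ij}=\tfrac{1}{2}(\underline{\theta}_{ij}+\bar\theta_{ij})$ for the midpoint angle and $\delta_{ij}=\tfrac{1}{2}(\bar\theta_{ij}-\underline{\theta}_{ij})\in[0,\pi)$ for the half-width, the coefficients in \eqref{thm2eq1} read $a_{ij}=\cos\psi_{ij}$, $b_{ij}=\sin\psi_{ij}$, and $c_{ij}=\cos\delta_{ij}$. If I put $X_{ij}=\rho\, e^{\texttt{i}\phi}$ with $\rho\geq 0$, then $a_{ij}\mathrm{Re}(X_{ij})+b_{ij}\mathrm{Im}(X_{ij})=\rho\cos(\phi-\psi_{ij})$, so the linear inequality in \eqref{eqgij} becomes $\rho\cos(\phi-\psi_{ij})\geq R_{ij}\cos\delta_{ij}$ while the second constraint is simply $\rho\leq R_{ij}$. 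I would then prove the two inclusions separately.

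For the inclusion ``$\subseteq$'', I first observe that the set on the right-hand side of \eqref{eqgij} is the intersection of a closed half-plane with a closed disk, hence a closed convex set. Every point of $\mathcal{G}_{ij}(R_{ij})$ has the form $R_{ij}e^{\texttt{i}\theta}$ with $\theta\in[\underline{\theta}_{ij},\bar\theta_{ij}]$, i.e. $|\theta-\psi_{ij}|\leq\delta_{ij}$; such a point trivially satisfies $|X_{ij}|=R_{ij}$ and, since $\cos$ is nonincreasing on $[0,\pi]$ and $\delta_{ij}<\pi$, it also satisfies $R_{ij}\cos(\theta-\psi_{ij})\geq R_{ij}\cos\delta_{ij}$. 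Thus the convex set on the right contains $\mathcal{G}_{ij}(R_{ij})$, and therefore contains $\textsf{Conv}(\mathcal{G}_{ij}(R_{ij}))$.

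For the reverse inclusion ``$\supseteq$'', I would give an explicit two-point decomposition. Rotating the plane by $-\psi_{ij}$, any $X_{ij}$ satisfying the two constraints corresponds to a point $(t,s)$ with $t=a_{ij}\mathrm{Re}(X_{ij})+b_{ij}\mathrm{Im}(X_{ij})\geq R_{ij}\cos\delta_{ij}$ and $t^2+s^2=|X_{ij}|^2\leq R_{ij}^2$. The vertical chord through $(t,s)$ meets the circle of radius $R_{ij}$ at the two points $(t,\pm\sqrt{R_{ij}^2-t^2})$, which in the original coordinates are $R_{ij}e^{\texttt{i}(\psi_{ij}\pm\alpha)}$ with $\alpha=\arccos(t/R_{ij})\in[0,\pi]$. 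Because $t/R_{ij}\geq\cos\delta_{ij}$ and $\arccos$ is nonincreasing, one gets $\alpha\leq\delta_{ij}$, so both endpoints lie on the arc $\mathcal{G}_{ij}(R_{ij})$. Since $|s|\leq\sqrt{R_{ij}^2-t^2}$, the point $(t,s)$ is a convex combination of these two endpoints, exhibiting $X_{ij}$ as a convex combination of two elements of $\mathcal{G}_{ij}(R_{ij})$. This yields the reverse inclusion and hence \eqref{eqgij}.

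The step I expect to be most delicate is the reverse inclusion in the large-arc regime $\delta_{ij}>\pi/2$ (equivalently $c_{ij}<0$): there the separating chord lies on the far side of the origin, the segment contains $0$ in its interior, and the intuitive ``thin cap'' picture breaks down. The rotation-and-vertical-chord construction above is chosen precisely because it is uniform in $\delta_{ij}\in[0,\pi)$ --- the only thing that must be checked is that the two cap endpoints stay within the angular window $[-\delta_{ij},\delta_{ij}]$, which is exactly the monotonicity estimate $\arccos(t/R_{ij})\leq\delta_{ij}$. I would also record the boundary case $\delta_{ij}=0$ (where the arc and both constraint sets collapse to the single point $R_{ij}e^{\texttt{i}\psi_{ij}}$) to confirm the formula remains correct there.
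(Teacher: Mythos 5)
Your proof is correct. Note that the paper does not actually prove this proposition: it declares the proof ``straightforward,'' refers to the similar arguments in \cite{Lu2018}, and offers only the geometric illustration of Figure~\ref{fig1} (the region enclosed by the chord through the arc's endpoints and the arc itself). Your argument is exactly the rigorous version of that picture, and it is complete: the forward inclusion follows since the half-plane--disk intersection is convex and contains the arc, and the reverse inclusion follows from your rotation-plus-vertical-chord decomposition, which writes any feasible point as a convex combination of the two points $R_{ij}e^{\texttt{i}(\psi_{ij}\pm\alpha)}$ with $\alpha=\arccos(t/R_{ij})\leq\delta_{ij}$. A genuine virtue of your write-up is that this decomposition is uniform in the half-width $\delta_{ij}\in[0,\pi)$, so it covers the wide-arc regime $\delta_{ij}>\pi/2$ (where $c_{ij}<0$ and the chord lies beyond the origin) --- precisely the case the paper later relies on in Section 4, Case 2, where $\pi\leq\bar\theta_{ij}-\underline\theta_{ij}<2\pi$ --- as well as the degenerate cases $R_{ij}=0$ and $\delta_{ij}=0$, none of which the paper addresses explicitly.
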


\begin{proposition} \label{prop2}
For the case of $\mathcal{A}_{ij}=\{\theta_{ij}^1,\theta_{ij}^2,\ldots,\theta_{ij}^M\}$ where $$0\leq \theta_{ij}^1 < \theta_{ij}^2 < \cdots<\theta_{ij}^M<2\pi,$$
we have
\begin{equation}\label{thm2eq3}
\textsf{Conv}(\mathcal{G}_{ij}(R_{ij}))=\left\{X_{ij}\,\left|\, \begin{array}{@{}ll} &a_{ij}^t \mathtt{Re}\left(X_{ij}\right)+ b_{ij}^t \mathtt{Im}\left(X_{ij}\right)\leq c_{ij}^t R_{ij},\\&t=1,2,\ldots,M \end{array}\right.\right\},
\end{equation}
where $\theta_{ij}^{M+1}:=\theta_{ij}^{1}+2\pi$ and
\begin{equation}\label{thm2eq2}
\begin{aligned}
&a_{ij}^t=\cos\left(\frac{\theta_{ij}^t+\theta_{ij}^{t+1}}{2}\right),~b_{ij}^t=\sin\left(\frac{\theta_{ij}^{t}+\theta_{ij}^{t+1}}{2}\right),\\
&c_{ij}^t=\cos\left(\frac{\theta_{ij}^{t+1}-\theta_{ij}^{t}}{2}\right).
\end{aligned}
\end{equation}
\end{proposition}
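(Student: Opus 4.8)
The plan is to pass to the real plane by identifying each admissible value $X_{ij}$ with the point $w=(\mathtt{Re}(X_{ij}),\mathtt{Im}(X_{ij}))\in\mathbb{R}^2$. I first dispose of the trivial case $R_{ij}=0$, where both sides reduce to $\{0\}$, and assume $R_{ij}>0$. Then $\mathcal{G}_{ij}(R_{ij})$ is exactly the finite set of points $P_t=R_{ij}(\cos\theta_{ij}^t,\sin\theta_{ij}^t)$, $t=1,\ldots,M$, lying on the circle of radius $R_{ij}$; since they sit on a circle at strictly increasing angles they are in convex position, so $\textsf{Conv}(\mathcal{G}_{ij}(R_{ij}))$ is the inscribed polygon whose vertices are the $P_t$ and whose edges are the chords $P_tP_{t+1}$ (with $P_{M+1}=P_1$). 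The role of the formulas \eqref{thm2eq2} is that the $t$-th listed inequality is precisely the supporting half-plane of the edge $P_tP_{t+1}$: writing $\mu_t=(\theta_{ij}^t+\theta_{ij}^{t+1})/2$ and $\delta_t=(\theta_{ij}^{t+1}-\theta_{ij}^{t})/2$, the constraint reads $\langle n_t,w\rangle\le R_{ij}\cos\delta_t$ with outward unit normal $n_t=(\cos\mu_t,\sin\mu_t)$, and $R_{ij}\cos\delta_t$ is exactly the distance from the origin to the chord through $P_t$ and $P_{t+1}$. Call this half-plane $H_t$ and let $\mathcal{P}$ denote the right-hand side of \eqref{thm2eq3}, so $\mathcal{P}=\bigcap_t H_t$.

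For the inclusion $\textsf{Conv}(\mathcal{G}_{ij}(R_{ij}))\subseteq\mathcal{P}$ it suffices, by linearity of each constraint, to check that every vertex $P_s$ satisfies every inequality $t$. By the angle-addition formula this reduces to $\cos(\theta_{ij}^s-\mu_t)\le\cos\delta_t$, i.e.\ to the statement that the circular distance from $\theta_{ij}^s$ to $\mu_t$ is at least $\delta_t$ (note $\delta_t<\pi$, so cosine is monotone on the relevant range). This holds because the open arc of half-width $\delta_t$ centred at $\mu_t$ is exactly the gap $(\theta_{ij}^t,\theta_{ij}^{t+1})$, which by the strict ordering of the angles contains no vertex; equality occurs precisely for $s\in\{t,t+1\}$, confirming that $P_t$ and $P_{t+1}$ lie on the bounding line. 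Passing to convex combinations extends the inequalities from the vertices to all of $\textsf{Conv}(\mathcal{G}_{ij}(R_{ij}))$.

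The reverse inclusion $\mathcal{P}\subseteq\textsf{Conv}(\mathcal{G}_{ij}(R_{ij}))$ is the crux, and I would prove it by a metric-projection argument. Take $X\in\mathcal{P}$ with image $w$ and suppose $w\notin K:=\textsf{Conv}(\mathcal{G}_{ij}(R_{ij}))$. Let $p$ be the metric projection of $w$ onto the compact convex set $K$; then $p\in\partial K$ and $w-p$ is a nonzero outward normal to $K$ at $p$. If $p$ lies in the relative interior of an edge $P_tP_{t+1}$, the outward normal direction there is $n_t$, so $\langle n_t,w\rangle>\langle n_t,p\rangle=R_{ij}\cos\delta_t$, violating the $t$-th constraint. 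If $p=P_t$ is a vertex, the outward normal cone at $P_t$ is generated by the two incident edge normals $n_{t-1}$ and $n_t$, so $w-p=\alpha n_{t-1}+\beta n_t$ with $\alpha,\beta\ge0$ not both zero, whence $\langle n_{t-1},w\rangle>R_{ij}\cos\delta_{t-1}$ or $\langle n_t,w\rangle>R_{ij}\cos\delta_t$; either way some listed constraint is violated. In both cases this contradicts $X\in\mathcal{P}$, so $w\in K$, giving equality when combined with the first part.

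I expect the main obstacle to be this reverse inclusion, and within it the vertex step: one must verify that the normal cone of $K$ at each $P_t$ is genuinely two-dimensional and generated by $n_{t-1},n_t$, which is where a non-degeneracy hypothesis enters. Three distinct points on a circle are never collinear, so for $M\ge 3$ the incident edges at $P_t$ are non-collinear and the normal cone is the full two-dimensional cone spanned by $n_{t-1}$ and $n_t$; the argument then closes. For $M=2$, however, $K$ degenerates to a segment whose endpoint normal cones are half-planes not captured by the two (antipodal) normals $n_1,n_2$, so $\mathcal{P}$ fails to be bounded and the identity breaks unless one also imposes $|X_{ij}|\le R_{ij}$. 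This degenerate case is precisely what must be watched; by contrast, the forward direction is routine trigonometry.
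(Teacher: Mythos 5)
The paper gives you nothing to compare against here: it declares both propositions straightforward, omits the proofs entirely, and points only to Figure~\ref{fig1} and to the similar reasoning in \cite{Lu2018}. Your write-up is a correct and complete substitute when $M\geq 3$. The forward inclusion is indeed the routine part: each inequality is linear, so it suffices to verify it at the vertices $P_s$, and your reduction of $\langle n_t,P_s\rangle\leq R_{ij}\cos\delta_t$ to the fact that $\theta_{ij}^s$ lies at circular distance at least $\delta_t$ from the midpoint angle $\mu_t$ (using $\delta_t<\pi$ and monotonicity of cosine on $[0,\pi]$) is correct, including the observation that equality holds exactly for $s\in\{t,t+1\}$. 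The reverse inclusion by metric projection is also sound; in the vertex case the cleanest finish is to write $w-p=\alpha n_{t-1}+\beta n_t$ with $\alpha,\beta\geq 0$, not both zero, and note that $0<|w-p|^2=\alpha\langle n_{t-1},w-p\rangle+\beta\langle n_t,w-p\rangle$ forces $\langle n_{t-1},w-p\rangle>0$ or $\langle n_t,w-p\rangle>0$, i.e.\ a violated constraint, since $P_t$ saturates both of the incident inequalities.

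Your closing caveat is not a pedantic remark but a genuine defect of the proposition as stated, which imposes no lower bound on $M$. For $M=2$ one has $\mu_2=\mu_1+\pi$ and $\delta_2=\pi-\delta_1$, hence $(a_{ij}^2,b_{ij}^2)=-(a_{ij}^1,b_{ij}^1)$ and $c_{ij}^2=-c_{ij}^1$, so the two inequalities in \eqref{thm2eq3} combine into the single equation $a_{ij}^1\mathtt{Re}(X_{ij})+b_{ij}^1\mathtt{Im}(X_{ij})=c_{ij}^1R_{ij}$: the right-hand side of \eqref{thm2eq3} is the whole chord line, which is unbounded, while the left-hand side is the chord segment. (Concretely, $\mathcal{A}_{ij}=\{0,\pi\}$ gives the line $\mathtt{Im}(X_{ij})=0$ versus the segment of real $X_{ij}$ with $|X_{ij}|\leq R_{ij}$; the case $M=1$ fails similarly, producing a half-plane instead of a point.) So the proposition needs the hypothesis $M\geq 3$, or alternatively the additional constraint $|X_{ij}|\leq R_{ij}$ appended to the right-hand side, exactly as you say. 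The paper does respect $M\geq 3$ wherever it invokes uniform discrete phase sets (Section~3 and Case~1 of Section~4), but the proposition itself, and the remark following it that the discrete case can be formulated with only $M$ linear inequality constraints, are stated without that hypothesis; your flag identifies a correction the statement should carry.
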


The proof of the above two propositions are straightforward, and thus are omitted here. We illustrate the two propositions in Figure \ref{fig1}. As illustrated in the left-hand side of Figure \ref{fig1}, for the case $\mathcal{A}_{ij}=[\underline {\theta}_{ij},\bar\theta_{ij}]$ where $\bar\theta_{ij}-\underline {\theta}_{ij} < 2\pi$, the set $\textsf{Conv} (\mathcal{G}_{ij}(R_{ij}))$ is defined by the set enclosed by the straight line passing through points $A$ and $B$, and the arc connecting the two points. Similarly, as illustrated in the right-hand side of Figure \ref{fig1}, $\textsf{Conv} (\mathcal{G}_{ij}(R_{ij}))$ can be represented by $M$ inequalities when $\mathcal{A}_{ij}=\{\theta_{ij}^1,\theta_{ij}^2,\ldots,\theta_{ij}^M\}$.

\begin{figure}
\centering
\includegraphics[height=5.5cm]{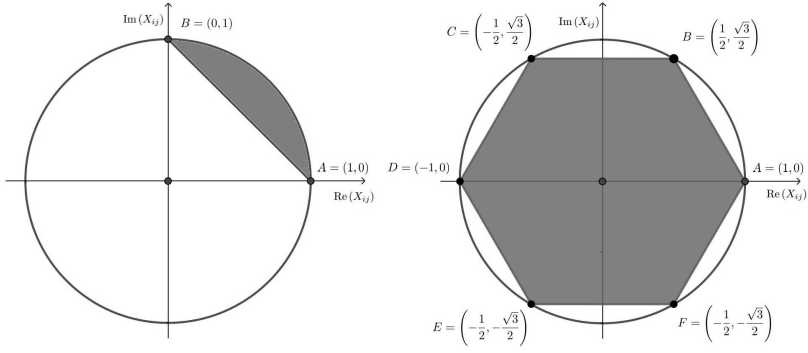}
\centering
\caption{An illustration of the set $\mathcal{G}_{ij}(R_{ij})$ for the case $R_{ij}=1$.}\label{fig1}
\end{figure}

Based on Propositions \ref{prop1} and \ref{prop2}, the constraint $X_{ij}\in \textsf{Conv} (\mathcal{G}_{ij}(R_{ij}))$ can be formulated as a linear inequality constraint and
a second-order cone constraint of the form $(\texttt{Re}^2(X_{ij})+\texttt{Im}^2(X_{ij}))^{1/2}\leq R_{ij}$ for the continuous case, or formulated as $M$
linear inequality constraints for the discrete case.

Based on Theorems \ref{thm1}, Propositions \ref{prop1} and \ref{prop2}, we can relax \eqref{CQP3} to the following convex problem:

\begin{equation}\label{ECSDP}\tag{ECSDP1}
\begin{aligned}
\min~& Q_{0}\cdot X\\
\textrm{s.t.} ~&Q_{i}\cdot X \leq b_{i}, ~i=1, \ldots, m, \\
&l_i^2 \leq  R_{ii}=X_{ii} \leq u^2_i,~ i=1, \ldots, n, \\
& X_{ij} \in \textsf{Conv} (\mathcal{G}_{ij}(R_{ij})), ~\{i,j\}\in  \mathcal{E},\\
& (R_{ii},R_{jj},R_{ij})\in \textsf{Conv}(\mathcal{H}_{ij}),~\{i,j\}\in  \mathcal{E},\\
&X\succeq 0.
\end{aligned}
\end{equation}
In order to study the tightness of \eqref{ECSDP}, we further define the set
\begin{equation}
\mathcal{F}_{ij}=\left\{(X_{ii},X_{jj},R_{ij},X_{ij})\,\left|\,
\begin{array}{@{}lll}  l_i^2 \leq  X_{ii} \leq u^2_i\\
l_j^2 \leq  X_{jj} \leq u^2_j\\
X_{ii}X_{jj}=R^2_{ij}=|X_{ij}|^2\\
\arg (X_{ij})\in \mathcal{A}_{ij} \end{array}\right.
\right\}.
\end{equation}
We have the following theorem.

\begin{theorem}\label{thm3}
Let $(X,R)$ be a feasible solution of \eqref{ECSDP}, then we have
$(X_{ii},X_{jj},R_{ij},X_{ij}) \in \textsf{Conv}(\mathcal{F}_{ij})$ for any $\{i,j\}\in \mathcal{E}$.
\end{theorem}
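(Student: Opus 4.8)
The plan is to exploit the \emph{product} (bilinear) structure that links $\mathcal{H}_{ij}$ and $\mathcal{G}_{ij}$: a point of $\mathcal{F}_{ij}$ is exactly a point $(R_{ii},R_{jj},R_{ij})\in\mathcal{H}_{ij}$ (under the identification $X_{ii}=R_{ii}$, $X_{jj}=R_{jj}$) together with a ``fiber'' element $X_{ij}\in\mathcal{G}_{ij}(R_{ij})$. Fix $\{i,j\}\in\mathcal{E}$ and let $(X,R)$ be feasible for \eqref{ECSDP}. The relevant constraints supply precisely two pieces of information: $(X_{ii},X_{jj},R_{ij})=(R_{ii},R_{jj},R_{ij})\in\textsf{Conv}(\mathcal{H}_{ij})$, and $X_{ij}\in\textsf{Conv}(\mathcal{G}_{ij}(R_{ij}))$. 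I would first dispose of the degenerate case $R_{ij}=0$: here the modulus bound $|X_{ij}|\le R_{ij}$ built into the representation of $\textsf{Conv}(\mathcal{G}_{ij}(R_{ij}))$ forces $X_{ij}=0$, any representation of $(R_{ii},R_{jj},0)$ as a convex combination of $\mathcal{H}_{ij}$-points uses only points with vanishing last coordinate, and attaching $X_{ij}=0$ to each (which is legitimate since $\mathcal{G}_{ij}(0)=\{0\}$) produces $\mathcal{F}_{ij}$-points whose combination reproduces the target.

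For the main case $R_{ij}>0$, the idea is to decompose both convex-hull memberships by Carath\'eodory's theorem and then recombine them through their product. Since $\textsf{Conv}(\mathcal{H}_{ij})\subseteq\mathbb{R}^3$, I write
\[
(R_{ii},R_{jj},R_{ij})=\sum_{k}\lambda_k\bigl(R^{(k)}_{ii},R^{(k)}_{jj},R^{(k)}_{ij}\bigr),
\]
with $\lambda_k\ge 0$, $\sum_k\lambda_k=1$, each $\bigl(R^{(k)}_{ii},R^{(k)}_{jj},R^{(k)}_{ij}\bigr)\in\mathcal{H}_{ij}$, so $R^{(k)}_{ij}\ge 0$ and $(R^{(k)}_{ij})^2=R^{(k)}_{ii}R^{(k)}_{jj}$. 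On the fiber side, the representation \eqref{eqgij}/\eqref{thm2eq3} makes $\textsf{Conv}(\mathcal{G}_{ij}(R_{ij}))=R_{ij}\cdot\textsf{Conv}(\mathcal{G}_{ij}(1))$ transparent, so I can expand $X_{ij}$ over the extreme points of the fiber,
\[
X_{ij}=\sum_{l}\mu_l\,R_{ij}\,e^{\texttt{i}\phi_l},\qquad \phi_l\in\mathcal{A}_{ij},
\]
with $\mu_l\ge 0$, $\sum_l\mu_l=1$; the extreme points of $\textsf{Conv}(\mathcal{G}_{ij}(1))$ are unit-modulus numbers whose arguments lie in $\mathcal{A}_{ij}$ in both the interval case (the arc) and the discrete case (the prescribed angles).

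Then I form the \emph{product} combination indexed by $(k,l)$, with weights $\lambda_k\mu_l$, of the points $\bigl(R^{(k)}_{ii},R^{(k)}_{jj},R^{(k)}_{ij},R^{(k)}_{ij}e^{\texttt{i}\phi_l}\bigr)$. Each is in $\mathcal{F}_{ij}$: the first three coordinates form an $\mathcal{H}_{ij}$-point, while $R^{(k)}_{ij}e^{\texttt{i}\phi_l}$ has modulus $R^{(k)}_{ij}$ and argument $\phi_l\in\mathcal{A}_{ij}$, hence lies in $\mathcal{G}_{ij}(R^{(k)}_{ij})$. Reproducing the target point is a short factorization: the first three coordinates give $\bigl(\sum_l\mu_l\bigr)\sum_k\lambda_k\bigl(R^{(k)}_{ii},R^{(k)}_{jj},R^{(k)}_{ij}\bigr)=(X_{ii},X_{jj},R_{ij})$, and the last coordinate gives
\[
\sum_{k,l}\lambda_k\mu_l\,R^{(k)}_{ij}e^{\texttt{i}\phi_l}=\Bigl(\sum_{k}\lambda_k R^{(k)}_{ij}\Bigr)\Bigl(\sum_{l}\mu_l e^{\texttt{i}\phi_l}\Bigr)=R_{ij}\sum_{l}\mu_l e^{\texttt{i}\phi_l}=X_{ij},
\]
the last equality being exactly the fiber expansion. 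Hence $(X_{ii},X_{jj},R_{ij},X_{ij})\in\textsf{Conv}(\mathcal{F}_{ij})$.

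The step requiring the most care is the separability of the last coordinate: the whole argument hinges on the fact that $\mathcal{G}_{ij}(R_{ij})$ scales \emph{linearly} in $R_{ij}$, so that pairing one phase direction $e^{\texttt{i}\phi_l}$ with every radius $R^{(k)}_{ij}$ simultaneously keeps the points feasible for $\mathcal{F}_{ij}$ and makes the double sum factor into a product of two independent convex combinations. I would also check carefully that the planar extreme-point decomposition of $\textsf{Conv}(\mathcal{G}_{ij}(1))$ uses only arguments in $\mathcal{A}_{ij}$, since that is what guarantees each product point meets the phase constraint; this is where the explicit forms of Propositions~\ref{prop1} and~\ref{prop2} enter.
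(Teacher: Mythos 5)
Your proof is correct and takes essentially the same route as the paper's: decompose the two memberships $(X_{ii},X_{jj},R_{ij})\in\textsf{Conv}(\mathcal{H}_{ij})$ and $X_{ij}\in\textsf{Conv}(\mathcal{G}_{ij}(R_{ij}))$ into finite convex combinations, pair every radius point with every phase point to obtain the product family of points of $\mathcal{F}_{ij}$ with weights $\lambda_k\mu_l$, and recover the target via the factorization $\sum_{k,l}\lambda_k\mu_l R^{(k)}_{ij}e^{\texttt{i}\phi_l}=\bigl(\sum_k\lambda_k R^{(k)}_{ij}\bigr)\bigl(\sum_l\mu_l e^{\texttt{i}\phi_l}\bigr)=X_{ij}$, which is exactly the paper's construction and key identity. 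Your explicit treatment of the degenerate case $R_{ij}=0$ and the appeal to extreme points of $\textsf{Conv}(\mathcal{G}_{ij}(1))$ are minor additions that do not change the argument.
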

\begin{proof}
Since $(X_{ii},X_{jj},R_{ij})\in \textsf{Conv}( \mathcal{H}_{ij})$, there exist a sequence of points
\begin{equation}
(X^t_{ii},X^t_{jj},R^t_{ij})\in \mathcal{H}_{ij}, ~t=1,\ldots,r,
\end{equation}
such that
\begin{equation}
(X_{ii},X_{jj},R_{ij})=\sum_{t=1}^r \lambda_t (X^t_{ii},X^t_{jj},R^t_{ij}),
\end{equation}
where $\lambda_1,\ldots,\lambda_r\geq 0$ and $\sum_{i=1}^r \lambda_t=1$.
Similarly, since $X_{ij}\in \textsf{Conv}( \mathcal{G}_{ij}(R_{ij}))$, there exist a sequence of points
\begin{equation}
X^s_{ij}\in  \mathcal{G}_{ij}(R_{ij}), ~s=1,\ldots,k,
\end{equation}
such that
\begin{equation}
X_{ij}=\sum_{s=1}^k \alpha_s X^s_{ij},
\end{equation}
where $\alpha_1,\ldots,\alpha_k\geq 0$ and $\sum_{s=1}^k \alpha_s=1$.
For each $s\in\{1,\ldots,k\}$, denote $ \arg (X^s_{ij})$ by $\theta^s_{ij}$, then we have
$X^s_{ij}=R_{ij}e^{ \texttt{i} \theta_{ij}}$. Now, we construct the following points:
\begin{equation}\label{thm3eq4}
(X^t_{ii},X^t_{jj},R^t_{ij},R^t_{ij} e^{ \texttt{i} \theta^s_{ij}})\in \mathcal{F}_{ij},~s=1,\ldots,k,~t=1,\ldots,r.
\end{equation}
Then, we have that
\begin{equation}\label{thm3eq1}
\sum_{t=1}^r \sum_{s=1}^k \lambda_t \alpha_s (X^t_{ii},X^t_{jj},R^t_{ij})
=\sum_{t=1}^r  \lambda_t (X^t_{ii},X^t_{jj},R^t_{ij})
= (X_{ii},X_{jj},R_{ij}),
\end{equation}
and
\begin{equation}\label{thm3eq2}
\sum_{s=1}^k \sum_{t=1}^r \lambda_t \alpha_s R^t_{ij} e^{ \texttt{i} \theta^s_{ij}}
=\sum_{s=1}^k \alpha_s \left( \sum_{t=1}^r  \lambda_t R^t_{ij}\right) e^{ \texttt{i} \theta^s_{ij}}
=\sum_{s=1}^k \alpha_s R_{ij}e^{ \texttt{i} \theta^s_{ij}}= X_{ij}.
\end{equation}
Following from \eqref{thm3eq1} and \eqref{thm3eq2}, we have
\begin{equation}\label{thm3eq3}
(X_{ii},X_{jj},R_{ij},X_{ij})= \sum_{s=1}^k \sum_{t=1}^r \lambda_t \alpha_s (X^t_{ii},X^t_{jj},R^t_{ij},R^t_{ij} e^{ \texttt{i} \theta^s_{ij}}).
\end{equation}
Since
\begin{equation}
\sum_{s=1}^k \sum_{t=1}^r \lambda_t \alpha_s =\sum_{s=1}^k \left(\sum_{t=1}^r \lambda_t\right) \alpha_s =\sum_{s=1}^k \alpha_s=1,
\end{equation}
Equation \eqref{thm3eq3} implies that $(X_{ii},X_{jj},R_{ij},X_{ij})$ is the convex combination of the $k\cdot r$ points defined in \eqref{thm3eq4}, thus $(X_{ii},X_{jj},R_{ij},X_{ij}) \in \textsf{Conv}(\mathcal{F}_{ij})$.
\qed
\end{proof}

In \eqref{ECSDP}, the variables $(R_{ii},R_{jj},R_{ij})$ are introduced to link the connections between $(X_{ii},X_{jj})$ and $X_{ij}$ by the following constraints:
\begin{equation}\label{eq6}
\begin{aligned}
&X_{ii}=R_{ii},~X_{jj}=R_{jj},\\
&X_{ij} \in \textsf{Conv} (\mathcal{G}_{ij}(R_{ij})),\\
&(R_{ii},R_{jj},R_{ij})\in \textsf{Conv}(\mathcal{H}_{ij}).
\end{aligned}
\end{equation}
In order to analyze whether the constraints in \eqref{eq6} can bridge a strong connection between $(X_{ii},X_{jj})$ and $X_{ij}$, we project the set $\mathcal{F}_{ij}$ onto the space of variables $(X_{ii},X_{jj},X_{ij})$ and define the following set:
\begin{equation}\label{Jij}
\mathcal{J}_{ij}=\left\{(X_{ii},X_{jj},X_{ij})\,\left|\, \begin{array}{@{}lll}
l_i^2 \leq  X_{ii} \leq u^2_i\\
l_j^2 \leq  X_{jj} \leq u^2_j\\
X_{ii}X_{jj}=|X_{ij}|^2\\
\arg (X_{ij})\in \mathcal{A}_{ij}
\end{array}\right.\right\}.
\end{equation}
A direct consequence of Theorem \ref{thm3} is the following theorem.

\begin{theorem}\label{thm4}
Let $(X,R)$ be a feasible solution of \eqref{ECSDP}, then for any $\{i,j\}\in \mathcal{E}$, we have
\begin{equation}\label{thm4eq1}
(X_{ii},X_{jj},X_{ij}) \in \textsf{Conv}(\mathcal{J}_{ij}).
\end{equation}
\end{theorem}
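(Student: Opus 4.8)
The plan is to exploit the fact that $\mathcal{J}_{ij}$ is, by construction, nothing but the image of $\mathcal{F}_{ij}$ under the linear coordinate projection $\pi(X_{ii},X_{jj},R_{ij},X_{ij}) = (X_{ii},X_{jj},X_{ij})$ that simply deletes the $R_{ij}$ entry. Granting this, the theorem follows in one line from Theorem \ref{thm3} together with the elementary fact that a linear map commutes with the convex hull operation, i.e. $\pi(\textsf{Conv}(\mathcal{F}_{ij})) = \textsf{Conv}(\pi(\mathcal{F}_{ij})) = \textsf{Conv}(\mathcal{J}_{ij})$. Since Theorem \ref{thm3} already places the feasible tuple $(X_{ii},X_{jj},R_{ij},X_{ij})$ inside $\textsf{Conv}(\mathcal{F}_{ij})$, applying $\pi$ yields $(X_{ii},X_{jj},X_{ij}) \in \textsf{Conv}(\mathcal{J}_{ij})$, which is exactly \eqref{thm4eq1}.

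First I would verify the projection identity $\pi(\mathcal{F}_{ij}) = \mathcal{J}_{ij}$ by comparing the defining constraints of the two sets. Every constraint of $\mathcal{J}_{ij}$ appears verbatim in $\mathcal{F}_{ij}$ (the box bounds on $X_{ii}, X_{jj}$, the argument constraint on $X_{ij}$, and $X_{ii}X_{jj} = |X_{ij}|^2$), while the only extra variable $R_{ij}$ in $\mathcal{F}_{ij}$ is pinned down by $R_{ij}^2 = X_{ii}X_{jj} = |X_{ij}|^2$ with $R_{ij} \geq 0$, hence $R_{ij} = |X_{ij}|$ is uniquely recoverable. Thus $(X_{ii},X_{jj},X_{ij}) \in \mathcal{J}_{ij}$ if and only if the lift $(X_{ii},X_{jj},|X_{ij}|,X_{ij})$ lies in $\mathcal{F}_{ij}$, giving $\pi(\mathcal{F}_{ij}) = \mathcal{J}_{ij}$ as claimed.

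An alternative, fully self-contained route that avoids even citing the linearity-convexity lemma is to reuse the explicit convex decomposition produced in the proof of Theorem \ref{thm3}. There the feasible point is written as $(X_{ii},X_{jj},R_{ij},X_{ij}) = \sum_{s,t} \lambda_t \alpha_s (X^t_{ii},X^t_{jj},R^t_{ij},R^t_{ij} e^{\texttt{i}\theta^s_{ij}})$. Deleting the third coordinate, one checks each $(X^t_{ii},X^t_{jj},R^t_{ij} e^{\texttt{i}\theta^s_{ij}})$ already lies in $\mathcal{J}_{ij}$: its modulus squared is $(R^t_{ij})^2 = X^t_{ii}X^t_{jj}$ because $(X^t_{ii},X^t_{jj},R^t_{ij}) \in \mathcal{H}_{ij}$, its argument $\theta^s_{ij}$ belongs to $\mathcal{A}_{ij}$ because $X^s_{ij} \in \mathcal{G}_{ij}(R_{ij})$, and the box bounds are inherited. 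Since the same nonnegative weights $\lambda_t \alpha_s$ still sum to one, $(X_{ii},X_{jj},X_{ij})$ is exhibited as a convex combination of points of $\mathcal{J}_{ij}$.

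I do not expect any genuine obstacle here; the content is entirely the projection observation. The only point deserving a word of care is the nonnegativity of $R_{ij}$, which is what makes the lift $(X_{ii},X_{jj},R_{ij},X_{ij})$ unambiguous and guarantees that $R_{ij}e^{\texttt{i}\theta}$ has the intended modulus; I would state this explicitly so that the identification of the projected extreme points with elements of $\mathcal{J}_{ij}$ is airtight.
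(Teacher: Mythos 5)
Your proposal is correct and is essentially the paper's own proof: the paper establishes the theorem exactly by your ``alternative route,'' namely it reuses the explicit convex decomposition \eqref{thm3eq3} from the proof of Theorem \ref{thm3}, deletes the $R_{ij}$ coordinate, and observes that each projected point $(X^t_{ii},X^t_{jj},R^t_{ij}e^{\texttt{i}\theta^s_{ij}})$ lies in $\mathcal{J}_{ij}$, so the same weights $\lambda_t\alpha_s$ exhibit $(X_{ii},X_{jj},X_{ij})$ as a convex combination of points of $\mathcal{J}_{ij}$. Your primary route --- that the linear projection $\pi$ deleting $R_{ij}$ satisfies $\pi(\mathcal{F}_{ij})=\mathcal{J}_{ij}$ and commutes with taking convex hulls --- is merely the abstract packaging of that same computation (and note that only the existence, not the uniqueness, of the lift $R_{ij}=|X_{ij}|$ is needed there), so there is no substantive difference between your argument and the paper's.
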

\begin{proof}
Based on equation \eqref{thm3eq3} in the proof of Theorem \ref{thm3}, we can derive the equation on the projected space of variables $(X_{ii},X_{jj},X_{ij})$ as follows:
\begin{equation}
(X_{ii},X_{jj},X_{ij})= \sum_{s=1}^k \sum_{t=1}^r \lambda_t \alpha_s (X^t_{ii},X^t_{jj},R^t_{ij} e^{ \texttt{i} \theta^s_{ij}}),
\end{equation}
where
\begin{equation}
(X^t_{ii},X^t_{jj},R^t_{ij} e^{ \texttt{i} \theta^s_{ij}})\in \mathcal{J}_{ij},~s=1,\ldots,k,~t=1,\ldots,r.
\end{equation}
Thus we have $$(X_{ii},X_{jj},X_{ij}) \in \textsf{Conv}(\mathcal{J}_{ij}).$$
\qed
\end{proof}

Theorem \ref{thm4} shows that by introducing the constraints in \eqref{eq6} into \eqref{ECSDP}, the variables $(X_{ii},X_{jj})$ and $X_{ij}$ are strongly connected,
in the sense that for any feasible solution $(X,R)$ of \eqref{ECSDP}, we have $(X_{ii},X_{jj},X_{ij}) \in \textsf{Conv}(\mathcal{J}_{ij})$ for any $\{i,j\}\in \mathcal{E}$. Thus, the convex hull of $\mathcal{J}_{ij}$ is described by the constraints in \eqref{ECSDP} exactly. Based on the above analysis, we may expect that \eqref{ECSDP} can be tighter than \eqref{CSDP}.
Note that $\mathcal{J}_{ij}$ is equivalent to the set $\mathcal{J}_{ij}^x$ defined in Section 1. The convex hull of $\mathcal{J}_{ij}^x$ is described exactly for general cases of \eqref{CQP}
with different types of $\mathcal{A}_{ij}$.

\section{Comparisons with existing semidefinite relaxations}

Besides of the proposed semidefinite relaxation \eqref{ECSDP}, some previous papers including \cite{Chen2016,Lu2018,Lu2020} have also discussed how to exploit the structure of the
phase angle constraints in \eqref{CQP} to derive tight semidefinite relaxations. In this section, we discuss the relationship between \eqref{ECSDP} and the existing ones in \cite{Chen2016,Lu2018,Lu2020}.

\subsection{Comparison with the semidefinite relaxation in \cite{Chen2016}}

We first discuss the connections between \eqref{ECSDP} and the semidefinite relaxation proposed in \cite{Chen2016}.
Consider the following continuous case of \eqref{CQP}:
\begin{equation}\label{CQP4}
\begin{aligned}
\min~&x^{\dag} Q_{0} x\\
\textrm{s.t.} ~&x^{\dag} Q_{i} x \leq b_{i}, ~ i=1, \ldots, m, \\
&l_i \leq {|}x_i {|} \leq u_i,~ i=1, \ldots, n, \\
&\arg(x_i{x_j}^{\dag}) \in [\underline{\theta}_{ij},\bar{\theta}_{ij}], ~\{i,j\}\in \mathcal{E},
\end{aligned}
\end{equation}
where $-\pi/2<\underline{\theta}_{ij}<\bar{\theta}_{ij}<\pi/2$. Using the notations in \cite{Chen2016}, the lifted matrix $X$ is represented as $X=W+\texttt{i} T$,
and the following set is defined:
\begin{equation}
\mathcal{J}_{C}=\left\{(W_{ii},W_{jj},W_{ij},T_{ij})\,\left|\, \begin{array}{@{}lll}
&l_i^2 \leq  W_{ii} \leq u^2_i\\
&l_j^2 \leq  W_{jj} \leq u^2_j\\
&L_{ij} W_{ij}\leq T_{ij}\leq U_{ij} W_{ij}\\
&W_{ii}W_{jj}= W^2_{ij}+T^2_{ij}
\end{array}\right.\right\},
\end{equation}
where $L_{ij}=\tan \underline{\theta}_{ij}$ and $U_{ij}=\tan \bar{\theta}_{ij}$. It is easy to check that the set $\mathcal{J}_{ij}$ defined in \eqref{Jij} is equivalent to the set $\mathcal{J}_{C}$ under the relationship $$(X_{ii},X_{jj},X_{ij})=(W_{ii},W_{jj},W_{ij}+\texttt{i} T_{ij}).$$ In order to describe the convex hull of $\mathcal{J}_C$, Chen et al. first relax $\mathcal{J}_C$ to the set defined by the following inequalities:
\begin{equation}\label{eq3}
\begin{aligned}
&l_i^2 \leq  W_{ii} \leq u^2_i, \\
&l_j^2 \leq  W_{jj} \leq u^2_j,\\
&L_{ij} W_{ij}\leq T_{ij}\leq U_{ij} W_{ij},\\
&W_{ii}W_{jj}\geq W^2_{ij}+T^2_{ij}.
\end{aligned}
\end{equation}
Then, they derive the following valid inequalities for $\mathcal{J}_{C}$:
\begin{equation}\label{eq4}
\begin{aligned}
& \pi^3_{ij} W_{ij} +\pi^4_{ij} T_{ij} \geq (l_j^2+l_j u_j)W_{ii}+(l_i^2 +l_i u_i)W_{jj}+l_i l_j u_i u_j -l_i^2 l_j^2, \\
& \pi^3_{ij} W_{ij} +\pi^4_{ij} T_{ij} \geq (u_j^2+l_j u_j)W_{ii}+(u_i^2 +l_i u_i)W_{jj}+l_i l_j u_i u_j -u_i^2 u_j^2,
\end{aligned}
\end{equation}
where
\begin{equation}
\begin{aligned}
&\pi_{ij}^3=(l_i+u_i)(l_j+u_j)\frac{ 1- f(L_{ij})f(U_{ij})}{1+ f(L_{ij})f(U_{ij})},\\
&\pi_{ij}^4=(l_i+u_i)(l_j+u_j)\frac{  f(L_{ij})+f(U_{ij})}{1+ f(L_{ij})f(U_{ij})},
\end{aligned}
\end{equation}
and $f(\cdot)$ is defined as
\begin{equation}
\begin{aligned}
f(t):=\left\{\begin{array}{@{}ll}
(\sqrt{1+t^2}-1)/t,~&\textrm{if}~t\neq 0,\\
0,~&\textrm{if}~t=0.
\end{array}\right.
\end{aligned}
\end{equation}
We cite the following results from \cite{Chen2016} (see Propositions 2 and 3 in \cite{Chen2016}).

\begin{theorem}[\cite{Chen2016}]\label{thmchen}
For any $\{i,j\}\in \mathcal{E}$, the system of inequalities in \eqref{eq4} is valid for $\mathcal{J}_{C}$. Moreover, we have
\begin{equation*}
\textsf{Conv} (\mathcal{J}_{C})=\{(W_{ii},W_{jj},W_{ij},T_{ij}) \,|\, (W_{ii},W_{jj},W_{ij},T_{ij})\textrm{~satisfies } \eqref{eq3}~\textrm{and}~ \eqref{eq4}\}.
\end{equation*}
\end{theorem}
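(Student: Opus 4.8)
The plan is to reduce the statement to the two building blocks already in hand: the description of $\textsf{Conv}(\mathcal{H}_{ij})$ in Theorem~\ref{thm1} and of $\textsf{Conv}(\mathcal{G}_{ij}(R_{ij}))$ in Proposition~\ref{prop1}. The first step is a coefficient identity. On $(-\pi/2,\pi/2)$ one has $\sqrt{1+\tan^2\theta}=\sec\theta$, so $f(\tan\theta)=(\sec\theta-1)/\tan\theta=\tan(\theta/2)$; hence $f(L_{ij})=\tan(\underline{\theta}_{ij}/2)$ and $f(U_{ij})=\tan(\bar{\theta}_{ij}/2)$. Substituting these into $\pi^3_{ij},\pi^4_{ij}$ and simplifying with $\frac{1-\tan A\tan B}{1+\tan A\tan B}=\frac{\cos(A+B)}{\cos(A-B)}$ and $\frac{\tan A+\tan B}{1+\tan A\tan B}=\frac{\sin(A+B)}{\cos(A-B)}$ (with $A=\underline{\theta}_{ij}/2$, $B=\bar{\theta}_{ij}/2$), I would obtain
\begin{equation*}
\pi^3_{ij}=(l_i+u_i)(l_j+u_j)\,\frac{a_{ij}}{c_{ij}},\qquad \pi^4_{ij}=(l_i+u_i)(l_j+u_j)\,\frac{b_{ij}}{c_{ij}},
\end{equation*}
with $a_{ij},b_{ij},c_{ij}$ exactly as in \eqref{thm2eq1}, and $c_{ij}>0$ since the half-width is below $\pi/2$. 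Thus $\pi^3_{ij}W_{ij}+\pi^4_{ij}T_{ij}=\frac{(l_i+u_i)(l_j+u_j)}{c_{ij}}\big(a_{ij}W_{ij}+b_{ij}T_{ij}\big)$, which is the bridge connecting \eqref{eq4} to Proposition~\ref{prop1} and to \eqref{eq1}--\eqref{eq2}.

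For validity, I would take any point of $\mathcal{J}_C$ and set $R:=\sqrt{W_{ii}W_{jj}}=\sqrt{W_{ij}^2+T_{ij}^2}$. The angle constraint $L_{ij}W_{ij}\le T_{ij}\le U_{ij}W_{ij}$ forces $(W_{ij},T_{ij})$ into the cone spanned by the rays $\underline{\theta}_{ij}$ and $\bar{\theta}_{ij}$, so Proposition~\ref{prop1} gives $a_{ij}W_{ij}+b_{ij}T_{ij}\ge c_{ij}R$, i.e. $\pi^3_{ij}W_{ij}+\pi^4_{ij}T_{ij}\ge(l_i+u_i)(l_j+u_j)R$. Since $(W_{ii},W_{jj},R)\in\mathcal{H}_{ij}$, Theorem~\ref{thm1} makes the right-hand sides of \eqref{eq1} and \eqref{eq2} both at most $(l_i+u_i)(l_j+u_j)R$. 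Chaining the two bounds delivers \eqref{eq4}; the degenerate case $R=0$ (possible only if some $l=0$) makes both sides vanish and is checked directly.

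Write $\mathcal{K}$ for the set on the right of the claimed identity. The inclusion $\textsf{Conv}(\mathcal{J}_C)\subseteq\mathcal{K}$ is immediate, since $\mathcal{K}$ is compact and convex (a rotated second-order cone $W_{ii}W_{jj}\ge W_{ij}^2+T_{ij}^2$ cut by linear inequalities) and contains $\mathcal{J}_C$ by validity. The reverse inclusion is the crux. I would fix $p=(W_{ii},W_{jj},W_{ij},T_{ij})\in\mathcal{K}$, put $\rho:=\sqrt{W_{ij}^2+T_{ij}^2}$ and $m:=a_{ij}W_{ij}+b_{ij}T_{ij}$, and produce an auxiliary radius $R^\circ\ge 0$ with
\begin{equation*}
\max\{\rho,\gamma\}\ \le\ R^\circ\ \le\ \min\{m/c_{ij},\sqrt{W_{ii}W_{jj}}\},
\end{equation*}
where $\gamma$ is the larger of the two right-hand sides of \eqref{eq1}--\eqref{eq2} divided by $(l_i+u_i)(l_j+u_j)$. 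The key observation is that the four pairwise inequalities needed for this interval to be nonempty match, one for one, the data defining $\mathcal{K}$: $\rho\le\sqrt{W_{ii}W_{jj}}$ is the cone in \eqref{eq3}; $\rho\le m/c_{ij}$ rephrases the angle constraint of \eqref{eq3} (the geometric bound behind Proposition~\ref{prop1}); $\gamma\le m/c_{ij}$ is precisely \eqref{eq4} read through the Step-one identity; and $\gamma\le\sqrt{W_{ii}W_{jj}}$ is the validity of \eqref{eq1}--\eqref{eq2} at $(W_{ii},W_{jj},\sqrt{W_{ii}W_{jj}})\in\mathcal{H}_{ij}$ from Theorem~\ref{thm1}. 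For any such $R^\circ$ one has $(W_{ii},W_{jj},R^\circ)\in\textsf{Conv}(\mathcal{H}_{ij})$ and $W_{ij}+\texttt{i}T_{ij}\in\textsf{Conv}(\mathcal{G}_{ij}(R^\circ))$, so the convex-combination argument of Theorem~\ref{thm3}, followed by the projection in Theorem~\ref{thm4}, places $(W_{ii},W_{jj},W_{ij}+\texttt{i}T_{ij})$ in $\textsf{Conv}(\mathcal{J}_{ij})=\textsf{Conv}(\mathcal{J}_C)$. I expect the main obstacle to be exactly this reverse inclusion: discovering that the auxiliary radius $R^\circ$ is the right device and verifying that the four interval conditions coincide with \eqref{eq3}, \eqref{eq4}, and the validity furnished by Theorem~\ref{thm1}. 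Once the Step-one factorization is in place, the remaining verification is routine.
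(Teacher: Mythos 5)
Your proof is correct, but there is nothing in the paper to compare it against: Theorem \ref{thmchen} is quoted from \cite{Chen2016} (Propositions 2 and 3 there) and the paper gives no proof of it. What you have built is a self-contained derivation of that cited result from the paper's own ingredients, namely Theorem \ref{thm1} (Chen et al.'s characterization of $\textsf{Conv}(\mathcal{H}_{ij})$), Proposition \ref{prop1}, and the convex-combination/projection machinery in the proofs of Theorems \ref{thm3} and \ref{thm4}. The two genuinely new devices, and the ones I checked most carefully, are (i) the identity $f(\tan\theta)=\tan(\theta/2)$ on $(-\pi/2,\pi/2)$, which gives $\pi^3_{ij}=(l_i+u_i)(l_j+u_j)\,a_{ij}/c_{ij}$ and $\pi^4_{ij}=(l_i+u_i)(l_j+u_j)\,b_{ij}/c_{ij}$ with $c_{ij}>0$, so that \eqref{eq4} becomes exactly inequalities \eqref{eq1}--\eqref{eq2} written at the radius $(a_{ij}W_{ij}+b_{ij}T_{ij})/c_{ij}$; and (ii) the auxiliary radius $R^\circ$ in the reverse inclusion, whose defining interval is nonempty precisely because of the cone and angle constraints in \eqref{eq3}, the inequalities \eqref{eq4} read through identity (i), and the validity half of Theorem \ref{thm1} applied at $(W_{ii},W_{jj},\sqrt{W_{ii}W_{jj}})$. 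Both devices are sound, and the appeal to the Theorem \ref{thm3}/\ref{thm4} argument is legitimate since those proofs use only the two memberships $(W_{ii},W_{jj},R^\circ)\in\textsf{Conv}(\mathcal{H}_{ij})$ and $W_{ij}+\texttt{i}T_{ij}\in\textsf{Conv}(\mathcal{G}_{ij}(R^\circ))$, with no circular reliance on Theorem \ref{thmchen}. It is worth noting that your argument is essentially the paper's Theorem \ref{thm6} run in reverse: there, Theorem \ref{thmchen} is assumed and used to expand a feasible $X$ of \eqref{SDPChen} into a feasible $(X,R)$ of \eqref{ECSDP}, whereas you construct $R^\circ$ directly from \eqref{eq3}--\eqref{eq4}; your route therefore makes Theorem \ref{thm6} a corollary rather than an input, which is a real gain in self-containedness. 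Two trivial quibbles: in the validity step the case $R=0$ needs no separate treatment (the chained inequalities hold verbatim at $R=0$, and for \eqref{eq2} the right-hand side is then only $\leq 0$, not $=0$ as you assert); and dividing by $(l_i+u_i)(l_j+u_j)$ to define $\gamma$ presumes $u_i,u_j>0$, an assumption implicit throughout the paper.
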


Based on Theorem \ref{thmchen}, Chen et al. have proposed the following semidefinite relaxation (named as SDP+complex valid inequalities (3a) and (3b) in \cite{Chen2016}):
\begin{equation}\label{SDPChen}\tag{ECSDP2}
\begin{aligned}
\min~& Q_{0}\cdot X\\
\textrm{s.t.} ~&Q_{i}\cdot X \leq b_{i}, ~ i=1, \ldots, m, \\
&l_i^2 \leq  W_{ii} \leq u^2_i, ~ i=1, \ldots, n, \\
&(W_{ii},W_{jj},W_{ij},T_{ij})\textrm{~satisfies } \eqref{eq4},~\{i,j\}\in \mathcal{E}, \\
& L_{ij} W_{ij}\leq T_{ij}\leq U_{ij} W_{ij},~\{i,j\}\in \mathcal{E},\\
&X=W+\texttt{i} T\succeq 0.
\end{aligned}
\end{equation}
We have the next theorem.

\begin{theorem}\label{thm6}
The two semidefinite relaxations \eqref{ECSDP} and \eqref{SDPChen} are equivalent
under the assumptions that $\mathcal{A}_{ij}=[\underline{\theta}_{ij},\bar{\theta}_{ij}]$ and $-\pi/2<\underline{\theta}_{ij}<\bar{\theta}_{ij}<\pi/2$ for all $\{i,j\}\in \mathcal{E}$.
\end{theorem}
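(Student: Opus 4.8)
The plan is to reduce the claimed equivalence to a per-edge statement and then verify the two local descriptions coincide. Observe that \eqref{ECSDP} and \eqref{SDPChen} share the same objective $Q_0\cdot X$, the same linear constraints $Q_i\cdot X\leq b_i$, the same diagonal bounds $l_i^2\leq X_{ii}\leq u_i^2$, and the same semidefinite constraint $X\succeq 0$; under the identification $X=W+\texttt{i}T$ the quantities $W_{ij}=\texttt{Re}(X_{ij})$ and $T_{ij}=\texttt{Im}(X_{ij})$ are fully determined by $X$, while in \eqref{ECSDP} the numbers $R_{ij}$ (for $\{i,j\}\in\mathcal{E}$) are auxiliary and the diagonal of $R$ is pinned by $R_{ii}=X_{ii}$. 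The two programs differ only in their edge-local constraints, each of which involves only the triple $(X_{ii},X_{jj},X_{ij})$ (plus the auxiliary $R_{ij}$ in \eqref{ECSDP}). Hence it suffices to show that, for every $\{i,j\}\in\mathcal{E}$, the set of $(X_{ii},X_{jj},X_{ij})$ admissible in one program equals that admissible in the other; since the objectives are identical, equal (projected) feasible sets give equal optimal values. Throughout I would use that $\mathcal{J}_{ij}$ and $\mathcal{J}_C$ are the same set in the coordinates $(X_{ii},X_{jj},X_{ij})=(W_{ii},W_{jj},W_{ij}+\texttt{i}T_{ij})$, as already noted in the text, and that under the assumption $-\pi/2<\underline{\theta}_{ij}<\bar{\theta}_{ij}<\pi/2$ Proposition \ref{prop1} applies with $c_{ij}=\cos((\bar\theta_{ij}-\underline\theta_{ij})/2)>0$.

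For the inclusion ``\eqref{ECSDP}-feasible $\Rightarrow$ \eqref{SDPChen}-feasible'', let $(X,R)$ be feasible for \eqref{ECSDP}. By Theorem \ref{thm4}, $(X_{ii},X_{jj},X_{ij})\in\textsf{Conv}(\mathcal{J}_{ij})=\textsf{Conv}(\mathcal{J}_C)$ for each edge, so by Theorem \ref{thmchen} the tuple $(W_{ii},W_{jj},W_{ij},T_{ij})$ satisfies both \eqref{eq3} and \eqref{eq4}. The inequalities \eqref{eq4} and the angle bounds $L_{ij}W_{ij}\leq T_{ij}\leq U_{ij}W_{ij}$ appearing in \eqref{eq3} are exactly the edge-local constraints of \eqref{SDPChen}; the remaining parts of \eqref{eq3}, namely the diagonal bounds and $W_{ii}W_{jj}\geq W_{ij}^2+T_{ij}^2$, are furnished by the shared constraints (the diagonal bounds directly, and $X_{ii}X_{jj}\geq|X_{ij}|^2$ from the $2\times 2$ principal minor of $X\succeq 0$). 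Thus $X$ satisfies every constraint of \eqref{SDPChen}, which yields $\mathrm{val}\eqref{ECSDP}\geq\mathrm{val}\eqref{SDPChen}$.

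The reverse inclusion is where the real work lies, and I expect it to be the main obstacle: starting from an $X$ feasible for \eqref{SDPChen}, I must \emph{construct} auxiliary values $R_{ij}$ realizing the \eqref{ECSDP} constraints. By Theorem \ref{thmchen} we have $(X_{ii},X_{jj},X_{ij})\in\textsf{Conv}(\mathcal{J}_C)=\textsf{Conv}(\mathcal{J}_{ij})$, so I would write it (Carathéodory) as a finite convex combination $(X_{ii},X_{jj},X_{ij})=\sum_p\mu_p(X_{ii}^p,X_{jj}^p,X_{ij}^p)$ of points of $\mathcal{J}_{ij}$, each satisfying $|X_{ij}^p|=\sqrt{X_{ii}^pX_{jj}^p}=:R_{ij}^p$ and $\arg(X_{ij}^p)=\theta^p\in[\underline\theta_{ij},\bar\theta_{ij}]$, and then \emph{define} $R_{ij}:=\sum_p\mu_p R_{ij}^p$. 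Then $(R_{ii},R_{jj},R_{ij})=\sum_p\mu_p(X_{ii}^p,X_{jj}^p,R_{ij}^p)$ is a convex combination of points of $\mathcal{H}_{ij}$, hence lies in $\textsf{Conv}(\mathcal{H}_{ij})$. For the remaining membership $X_{ij}\in\textsf{Conv}(\mathcal{G}_{ij}(R_{ij}))$, I would check the two defining inequalities of \eqref{eqgij}: the modulus bound $|X_{ij}|=\bigl|\sum_p\mu_p R_{ij}^p e^{\texttt{i}\theta^p}\bigr|\leq\sum_p\mu_p R_{ij}^p=R_{ij}$ follows from the triangle inequality, and for the linear cut I would use the identity $a_{ij}\cos\theta+b_{ij}\sin\theta=\cos\bigl(\theta-\tfrac{\underline\theta_{ij}+\bar\theta_{ij}}{2}\bigr)$ coming from \eqref{thm2eq1}, which for $\theta\in[\underline\theta_{ij},\bar\theta_{ij}]$ is at least $c_{ij}$; summing $\mu_p R_{ij}^p\bigl(a_{ij}\cos\theta^p+b_{ij}\sin\theta^p\bigr)\geq\mu_p R_{ij}^p c_{ij}$ over $p$ gives $a_{ij}\texttt{Re}(X_{ij})+b_{ij}\texttt{Im}(X_{ij})\geq c_{ij}R_{ij}$. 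Hence $(X,R)$ with these $R_{ij}$ is feasible for \eqref{ECSDP}, giving $\mathrm{val}\eqref{SDPChen}\geq\mathrm{val}\eqref{ECSDP}$.

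Combining the two inclusions shows the projected feasible sets in the variable $X$ coincide edge by edge, and therefore globally; since the objectives agree, \eqref{ECSDP} and \eqref{SDPChen} have the same optimal value and are equivalent. I would emphasize that the only place the hypothesis $-\pi/2<\underline\theta_{ij}<\bar\theta_{ij}<\pi/2$ is essential is in guaranteeing that Proposition \ref{prop1} describes $\textsf{Conv}(\mathcal{G}_{ij}(R_{ij}))$ and that $\mathcal{J}_{ij}$ matches $\mathcal{J}_C$ (so that Theorem \ref{thmchen} is applicable); the convex-combination construction of $R_{ij}$ in the reverse direction is the technical heart of the argument.
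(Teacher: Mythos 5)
Your proposal is correct and follows essentially the same route as the paper's proof: the forward direction via Theorem \ref{thm4} and the identification of $\mathcal{J}_{ij}$ with $\mathcal{J}_C$, and the reverse direction by decomposing $(X_{ii},X_{jj},X_{ij})\in\textsf{Conv}(\mathcal{J}_{ij})$ into extreme points, defining $R_{ij}$ as the weighted sum of the moduli $|X^p_{ij}|$, and checking membership in $\textsf{Conv}(\mathcal{H}_{ij})$ and $\textsf{Conv}(\mathcal{G}_{ij}(R_{ij}))$. The only cosmetic difference is that you certify $X_{ij}\in\textsf{Conv}(\mathcal{G}_{ij}(R_{ij}))$ by verifying the inequality description in Proposition \ref{prop1}, whereas the paper rescales the decomposition to exhibit $X_{ij}$ as an explicit convex combination of points $R_{ij}e^{\texttt{i}\arg(X^t_{ij})}\in\mathcal{G}_{ij}(R_{ij})$.
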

\begin{proof}
We first assume that $(X,R)$ is a feasible solution of \eqref{ECSDP}. Letting $W=\texttt{Re}(X)$ and $T=\texttt{Im}(X)$.
Then based on Theorem \ref{thm4}, for any $\{i,j\}\in \mathcal{E}$, we have $(X_{ii},X_{jj},X_{ij})\in \textsf{Conv}(\mathcal{J}_{ij})$,
which implies $(W_{ii},W_{jj},W_{ij},T_{ij})\in \textsf{Conv}(\mathcal{J}_{C})$. Thus $(W_{ii},W_{jj},W_{ij},T_{ij})$ satisfies
\eqref{eq3} and \eqref{eq4} for $\{i,j\}\in \mathcal{E}$, and $X=W+\texttt{i} T$ is feasible to \eqref{SDPChen}.

Next, we assume that $X=W+\texttt{i} T$ is a feasible solution to \eqref{SDPChen}. Then for any $\{i,j\}\in \mathcal{E}$, based on Theorem \ref{thmchen}, we have
$(W_{ii},W_{jj},W_{ij},T_{ij})\in \textsf{Conv}(\mathcal{J}_{C})$, which implies
$(X_{ii},X_{jj},X_{ij})\in \textsf{Conv}(\mathcal{J}_{ij})$. Then, there exist
a sequence of points
\begin{equation}
(X^t_{ii},X^t_{jj},X^t_{ij})\in \mathcal{J}_{ij},~t=1,\ldots,k,
\end{equation}
such that
\begin{equation}
(X_{ii},X_{jj},X_{ij})=\sum_{t=1}^k \lambda_t (X^t_{ii},X^t_{jj},X^t_{ij}),
\end{equation}
where $\lambda_t\geq 0$ for $t=1,\ldots,k$ and $\sum_{t=1}^k \lambda_t=1$.
We expand the feasible solution $X$ to a feasible solution $(X,R)$ to \eqref{ECSDP} as follows: For each $i\in\{1,\ldots,n\}$, let $R^t_{ii}=|X^t_{ii}|$
for $t=1,\ldots,k$, and $R_{ii}=\sum_{t=1}^k \lambda_t R^t_{ii}$. Similarly, for each $\{i,j\}\in \mathcal{E}$, we assign $R_{ij}=\sum_{t=1}^k \lambda_t R^t_{ij}$, where $R^t_{ij}=|X^t_{ij}|$.
Based on the above constructions, we can check that $(R^t_{ii},R^t_{jj},R^t_{ij})\in \mathcal{H}_{ij}$ for all $t=1,\ldots,k$, so we have
\begin{equation}\label{thm6eq1}
(R_{ii},R_{jj},R_{ij})=\sum_{t=1}^k \lambda_t(R^t_{ii},R^t_{jj},R^t_{ij})\in \textsf{Conv}(\mathcal{H}_{ij}).
\end{equation}
Meanwhile, we have
\begin{equation}\label{thm6eq2}
X_{ij}=\sum_{t=1}^k \lambda_t X^t_{ij} = \sum_{t=1}^k \frac{\lambda_t R^t_{ij}}{R_{ij}} R_{ij} e^{ \texttt{i} \arg (X^t_{ij}) }.
\end{equation}
Since
\begin{equation}\label{thm6eq3}
\sum_{t=1}^k \frac{\lambda_t R^t_{ij}}{R_{ij}}=1,
\end{equation}
Equations \eqref{thm6eq2} and \eqref{thm6eq3} together imply that $X_{ij}$ is a convex combination of the points $R_{ij} e^{ \texttt{i} \arg (X^t_{ij}) } \in \mathcal{G}_{ij}(R_{ij})$,
thus
\begin{equation}\label{thm6eq4}
X_{ij}\in \textsf{Conv} (\mathcal{G}_{ij}(R_{ij})).
\end{equation}
Then, the feasibility of $X$ to \eqref{SDPChen}, together with \eqref{thm6eq1} and \eqref{thm6eq4}, implies that $(X,R)$ is feasible to \eqref{ECSDP}.

Based on the above discussions, we conclude that $X=W+ \texttt{i} T$ is feasible to \eqref{SDPChen} if and only if there exists a matrix $R$ such that $(X,R)$ is feasible to \eqref{ECSDP}, and the two feasible solutions have the same objective value. Thus the two problems are equivalent.
\qed
\end{proof}

Theorem \ref{thm6} shows that when $\mathcal{A}_{ij}=[\underline{\theta}_{ij},\bar{\theta}_{ij}] \subseteq(-\pi/2,\pi/2)$ for all $\{i,j\}\in \mathcal{E}$, the two relaxations \eqref{ECSDP} and \eqref{SDPChen} are equivalent. The main difference between the two relaxations is that the convex hull of $\mathcal{J}_{C}$ in \eqref{SDPChen} does not introduce the matrix $R$ to link variables $(X_{ii},X_{jj})$ and $X_{ij}$, so that \eqref{SDPChen} can be more compact than \eqref{ECSDP}. However, for general cases of $\mathcal{A}_{ij}$, it is not easy to derive the convex hull of $\mathcal{J}_{ij}$ directly. In these cases, it is very meaningful to introduce the matrix $R$ to link the connections between $(X_{ii},X_{jj})$ and $X_{ij}$, from which we may derive the convex hull of $\mathcal{J}_{ij}$ easily. Hence, \eqref{ECSDP} is more general than \eqref{SDPChen}, in the sense that it can be applied to general cases of \eqref{CQP} with different types of $\mathcal{A}_{ij}$. Moreover, even for the case in which $\mathcal{A}_{ij}\subseteq(-\pi/2,\pi/2)$ for all $\{i,j\}\in \mathcal{E}$, introducing the real matrix $R$ is also helpful, since we may further enhance \eqref{ECSDP} by adding a new constraint $R\succeq 0$. We will discuss the effects of adding $R\succeq 0$ to \eqref{ECSDP} in the next section.

\subsection{Comparison with the semidefinite relaxation in \cite{Lu2020}}
Next, we analyze the relationship between \eqref{ECSDP} and the enhanced semidefinite relaxation proposed in \cite{Lu2020}, which includes
the semidefinite relaxation proposed in \cite{Lu2018} as a special case. In \cite{Lu2020}, Lu et al.  have studied the following nonhomogeneous quadratic programming problem:
\begin{equation}\label{CQP5}
\begin{aligned}
\min~&\frac{1}{2} x^{\dag} Q_{0} x + \texttt{Re}(c^\dag x)\\
&l_i \leq {|}x_i {|} \leq u_i, ~ i=1, \ldots, n, \\
&\arg(x_i) \in \mathcal{A}_i\subseteq [0,2\pi], ~i=1, \ldots, n.
\end{aligned}
\end{equation}
The enhanced semidefinite relaxation introduced in \cite{Lu2020} is defined as follows:
\begin{equation}\label{ECSDP3}\tag{ECSDP3}
\begin{aligned}
\min~&\frac{1}{2} Q_{0} \cdot X + \texttt{Re}(c^\dag x)\\
&l_i \leq r_i \leq u_i, ~ i=1, \ldots, n, \\
& x_i \in  \textsf{Conv}(\mathcal{G}_i(r_i)),  ~i=1, \ldots, n, \\
&X_{ii}\geq r_i^2,~ X_{ii}-(l_i+u_i)r_i+l_i u_i\leq 0, ~i=1, \ldots, n,\\
&X\succeq xx^\dag,
\end{aligned}
\end{equation}
where $\mathcal{G}_i(r_i):=\{x_i\,|\, r_i=|x_i|, \arg(x_i) \in \mathcal{A}_i\}$ for $r_i>0$ and $\mathcal{G}_i(0)=\{0\}$. To show the connections between \eqref{ECSDP3} and \eqref{ECSDP}, we first transform \eqref{CQP5} to a homogeneous problem by appending $x_{n+1}=1$ to the vector $x\in \mathbb{C}^n$ to construct an $(n+1)$-dimensional vector $y$, and derive
the following problem:
\begin{equation}\label{CQP6}
\begin{aligned}
\min~&\frac{1}{2} y^{\dag} \tilde{Q}_{0} y\\
&l_i \leq {|}y_i {|} \leq u_i,  ~i=1, \ldots, n, \\
&\arg(y_i y_{n+1}^\dag) \in \mathcal{A}_i,~ i=1, \ldots, n,\\
&y_{n+1}=1,
\end{aligned}
\end{equation}
where
\begin{align*}\tilde{Q}_{0}=\begin{bmatrix}
Q_0 ~&c\\
c^{\dag} ~&0\\
\end{bmatrix}\in \mathbb{C}^{(n+1)\times(n+1)}.
\end{align*}
Then, \eqref{ECSDP} for \eqref{CQP6} is formulated as follows:
\begin{equation}\label{ECSDP4}\tag{ECSDP4}
\begin{aligned}
\min~&\frac{1}{2} \tilde{Q}_{0} \cdot Y\\
&l^2_i \leq Y_{ii} \leq u^2_i, ~ i=1, \ldots, n, \\
&Y_{i,n+1}\in \textsf{Conv}(\mathcal{G}_{i,n+1}(R_{i,n+1})),\\
& Y_{ii}\geq R_{i,n+1}^2,~ Y_{i,i}-(l_i+u_i)R_{i,n+1}+l_i u_i\leq 0, ~i=1, \ldots, n,\\
&Y_{n+1,n+1}=1,~Y\succeq 0,
\end{aligned}
\end{equation}
where the inequality $Y_{i,i}-(l_i+u_i)R_{i,n+1}+l_i u_i\leq 0$ is derived from \eqref{eq1} and \eqref{eq2}
with $l_{n+1}=u_{n+1}=1$. Then we have the next theorem.

\begin{theorem}\label{thm7}
Problem \eqref{ECSDP4} is equivalent to \eqref{ECSDP3}.
\end{theorem}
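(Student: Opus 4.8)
The plan is to establish a value-preserving bijection between the feasible sets of the two problems. Given a feasible $Y$ of \eqref{ECSDP4}, I would partition the Hermitian matrix into blocks
\[
Y=\begin{bmatrix} X & x\\ x^\dag & 1\end{bmatrix},
\]
reading off $X\in\mathbb{C}^{n\times n}$ as the leading principal block, $x=(Y_{1,n+1},\ldots,Y_{n,n+1})^\top$, and $Y_{n+1,n+1}=1$, and I would set $r_i:=R_{i,n+1}$ for $i=1,\ldots,n$. The reverse construction assembles $Y$ from a feasible triple $(X,x,r)$ of \eqref{ECSDP3} by the same block formula and takes $R_{i,n+1}:=r_i$. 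I would then verify that $(X,x,r)$ is feasible for \eqref{ECSDP3} if and only if $(Y,R)$ is feasible for \eqref{ECSDP4}, and that the corresponding objective values coincide.

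Two ingredients make the cone constraint and the objective match at once. First, since $Y_{n+1,n+1}=1>0$, the Schur complement shows that $Y\succeq 0$ is equivalent to $X\succeq xx^\dag$, which is exactly the conic constraint of \eqref{ECSDP3}. Second, a direct block multiplication with $\tilde Q_0=\begin{bmatrix}Q_0 & c\\ c^\dag & 0\end{bmatrix}$ gives $\tilde Q_0\cdot Y=\texttt{Trace}(Q_0X)+\texttt{Trace}(cx^\dag)+c^\dag x=Q_0\cdot X+2\,\texttt{Re}(c^\dag x)$, using $\texttt{Trace}(cx^\dag)=\overline{c^\dag x}$, so the two objectives agree after the factor $\tfrac12$. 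The phase-angle pieces also match: because \eqref{CQP6} is the homogenization of \eqref{CQP5} with $y_{n+1}=1$, the set attached to the edge $\{i,n+1\}$ is $\mathcal{A}_i$ and $Y_{i,n+1}=y_i$, so $\mathcal{G}_{i,n+1}(R_{i,n+1})=\mathcal{G}_i(r_i)$ and the membership constraints $Y_{i,n+1}\in\textsf{Conv}(\mathcal{G}_{i,n+1}(R_{i,n+1}))$ and $x_i\in\textsf{Conv}(\mathcal{G}_i(r_i))$ are literally identical under the correspondence $X_{ii}=Y_{ii}$, $x_i=Y_{i,n+1}$, $r_i=R_{i,n+1}$. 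I would also recall, as noted just before the theorem, that substituting $l_{n+1}=u_{n+1}=1$ and $R_{n+1,n+1}=1$ collapses both cuts \eqref{eq1} and \eqref{eq2} to the single inequality $Y_{ii}-(l_i+u_i)R_{i,n+1}+l_iu_i\le 0$, while $R_{i,n+1}^2\le R_{ii}R_{n+1,n+1}$ becomes $Y_{ii}\ge R_{i,n+1}^2$; this explains why \eqref{ECSDP4} lists only one such cut.

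The main, and essentially only, obstacle is reconciling the modulus constraints, which the two formulations phrase differently: \eqref{ECSDP3} bounds $r_i$ directly through $l_i\le r_i\le u_i$, whereas \eqref{ECSDP4} bounds the diagonal through $l_i^2\le Y_{ii}\le u_i^2$. I would close this gap in both directions. From \eqref{ECSDP3} to \eqref{ECSDP4}: $X_{ii}\ge r_i^2\ge l_i^2$, and since $(l_i+u_i)r_i-l_iu_i$ is nondecreasing in $r_i$, the upper cut yields $X_{ii}\le (l_i+u_i)u_i-l_iu_i=u_i^2$, recovering $l_i^2\le Y_{ii}\le u_i^2$. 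From \eqref{ECSDP4} to \eqref{ECSDP3}: using $R_{i,n+1}\ge 0$ (built into $\textsf{Conv}(\mathcal{G}_{i,n+1}(\cdot))$), the upper cut gives $(l_i+u_i)R_{i,n+1}\ge Y_{ii}+l_iu_i\ge l_i^2+l_iu_i$, hence $r_i\ge l_i$, while $R_{i,n+1}^2\le Y_{ii}\le u_i^2$ gives $r_i\le u_i$; the degenerate case $l_i=u_i=0$ is trivial.

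Assembling these verifications, every feasible $Y$ of \eqref{ECSDP4} yields a feasible $(X,x,r)$ of \eqref{ECSDP3} with equal objective value, and conversely every feasible $(X,x,r)$ of \eqref{ECSDP3} extends to a feasible $(Y,R)$ of \eqref{ECSDP4} with the same value. Hence the two relaxations share the same optimal value and are equivalent, which proves the theorem.
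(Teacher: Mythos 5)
Your proof is correct and takes essentially the same approach as the paper: the paper's proof introduces exactly the same correspondence ($Y$ assembled from the blocks $X$, $x$, $1$, with $r_i = R_{i,n+1}$) and then declares the feasibility equivalence and objective equality ``easy to check.'' Your additional steps (the Schur-complement argument, the trace identity for the objectives, and the reconciliation of $l_i\le r_i\le u_i$ with $l_i^2\le Y_{ii}\le u_i^2$ via the cuts in both directions) are precisely that omitted verification, and they are accurate.
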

\begin{proof}
Consider the following correspondence relationship between the solutions of \eqref{ECSDP3} and \eqref{ECSDP4}:
\begin{equation}
\begin{aligned}
&Y=\begin{bmatrix}
X ~&x\\
x^{\dag} ~&1\\
\end{bmatrix},\\
&r_i=R_{i,n+1},~i=1,\ldots,n.
\end{aligned}
\end{equation}
It is easy to check that $(Y,R)$ is feasible to \eqref{ECSDP4} if and only if the corresponding solution $(X,x,r)$
is feasible to \eqref{ECSDP3}, and the two solutions have the same objective value. Thus, the two problems are equivalent.
\qed
\end{proof}

Theorem \ref{thm7} shows the equivalence between \eqref{ECSDP3} and \eqref{ECSDP4}. Moreover, the following theorem shows that \eqref{ECSDP3} is also equivalent to \eqref{CSDP} on certain cases of \eqref{CQP5}.
\begin{theorem}\label{thm8}
For problem \eqref{CQP5}, under the assumptions that $c=0$ and $0\in \textsf{Conv} (\mathcal{G}_i(r_i))$ for any $r_i\geq 0$, \eqref{ECSDP3} is equivalent to \eqref{CSDP}.
\end{theorem}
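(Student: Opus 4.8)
The plan is to exploit the fact that when $c=0$ the objective of \eqref{ECSDP3} becomes $\frac{1}{2}Q_0\cdot X$, which depends only on the matrix variable $X$ and not on the auxiliary variables $x$ and $r$. The same is true of the natural relaxation \eqref{CSDP} of \eqref{CQP5}, whose objective is likewise $\frac{1}{2}Q_0\cdot X$. Hence both problems reduce to minimizing the \emph{same} linear functional of $X$, and the equivalence will follow once I show that the projection of the feasible region of \eqref{ECSDP3} onto the $X$-space coincides exactly with the feasible set $\{X\mid l_i^2\leq X_{ii}\leq u_i^2,~X\succeq 0\}$ of \eqref{CSDP}.

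First I would establish the easy inclusion: every $X$ arising from an \eqref{ECSDP3}-feasible triple $(X,x,r)$ is \eqref{CSDP}-feasible. Positive semidefiniteness is immediate, since $X\succeq xx^\dag$ and $xx^\dag\succeq 0$ give $X\succeq 0$. For the diagonal bounds, the constraint $X_{ii}\geq r_i^2$ combined with $r_i\geq l_i\geq 0$ yields $X_{ii}\geq l_i^2$, while $X_{ii}-(l_i+u_i)r_i+l_iu_i\leq 0$ combined with $r_i\leq u_i$ yields $X_{ii}\leq (l_i+u_i)u_i-l_iu_i=u_i^2$. Notably, this direction uses neither of the hypotheses $c=0$ or $0\in\textsf{Conv}(\mathcal{G}_i(r_i))$.

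The crux will be the reverse inclusion: given any $X$ with $l_i^2\leq X_{ii}\leq u_i^2$ and $X\succeq 0$, I must produce $x$ and $r$ making $(X,x,r)$ feasible for \eqref{ECSDP3}. The natural choice is $x=0$, which trivially satisfies $X\succeq xx^\dag=0$ but demands $x_i=0\in\textsf{Conv}(\mathcal{G}_i(r_i))$ for every $i$ --- and this is precisely the step where the standing assumption $0\in\textsf{Conv}(\mathcal{G}_i(r_i))$ for any $r_i\geq 0$ becomes indispensable. It then remains to select each $r_i\in[l_i,u_i]$ meeting both $r_i^2\leq X_{ii}$ and $X_{ii}\leq(l_i+u_i)r_i-l_iu_i$. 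Taking $r_i=\sqrt{X_{ii}}$, the first inequality holds with equality, and the second reduces to $(\sqrt{X_{ii}}-l_i)(\sqrt{X_{ii}}-u_i)\leq 0$, which is valid because $l_i\leq\sqrt{X_{ii}}\leq u_i$; this sign argument is the only genuine computation, and it is routine.

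Combining the two inclusions shows the $X$-projections of the feasible sets agree, and since the two objectives are identical, the problems share the same optimal value (and the same optimal $X$), so they are equivalent. I expect the main obstacle to be conceptual rather than technical: recognizing that setting $x=0$ is the right construction and that the hypothesis $0\in\textsf{Conv}(\mathcal{G}_i(r_i))$ is exactly what makes $x=0$ admissible, thereby collapsing the polar-coordinate cuts of \eqref{ECSDP3} back to the plain box-and-PSD constraints of \eqref{CSDP}.
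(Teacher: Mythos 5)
Your proof is correct, and its crux coincides exactly with the paper's argument: extend any \eqref{CSDP}-feasible $X$ by setting $x=0$ and $r_i=(X_{ii})^{1/2}$, with the inequality $(r_i-l_i)(r_i-u_i)\leq 0$ handling the linear cut and the hypothesis $0\in \textsf{Conv}(\mathcal{G}_i(r_i))$ making $x=0$ admissible, while $c=0$ equates the objectives. The only (cosmetic) difference is in the easy direction, where the paper invokes the equivalence with \eqref{ECSDP4} from Theorem \ref{thm7} to conclude that \eqref{ECSDP3} is at least as tight as \eqref{CSDP}, whereas you verify the positive semidefiniteness and diagonal bounds directly from the constraints of \eqref{ECSDP3}, which makes your write-up self-contained.
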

\begin{proof}
Since \eqref{ECSDP3} is equivalent to \eqref{ECSDP4}, it is at least as tight as \eqref{CSDP}. On the other hand, under the given assumptions, let $X^*$ be the optimal solution of \eqref{CSDP},
we can always extend it to a solution $(X^*,x^*,r^*)$ of \eqref{ECSDP3} by assigning $x_i^*=0$ and $r^*_i=(X^*_{ii})^{1/2}$ for $i=1,\ldots,n$, and the two solutions
have the same objective value when $c=0$. Thus \eqref{ECSDP3} is equivalent to \eqref{CSDP}.
\qed
\end{proof}

Theorem \ref{thm8} shows that for certain types of homogeneous quadratic programming problems, both the semidefinite relaxations \eqref{ECSDP3} and \eqref{ECSDP4} can not be tighter than the conventional semidefinite relaxation \eqref{CSDP}. In many applications in signal processing \cite{Demir2014,Demir2015,Maio2009}, the problem is just the case that satisfies the assumptions in Theorem \ref{thm8}.
However, \eqref{ECSDP4} is only a special case of \eqref{ECSDP} for which the set $\mathcal{E}$ is predefined as $$\mathcal{E}=\{\{1,n+1\},\ldots,\{n,n+1\}\}.$$
In fact, we may select some other sets $\mathcal{E}$ to define \eqref{ECSDP}, by which it is possible to derive a semidefinite relaxation that is tighter than \eqref{ECSDP4}. For example, for the case that $\mathcal{A}_{i}=\mathcal{A}^M$ for all $i=1,\ldots,n$, where $M\geq 3$, the constraints $\arg(x_i)\in\mathcal{A}^M$ for $i=1,\ldots,n$ imply that $\arg(x_i x_j^\dag)\in \mathcal{A}^M$ for any $1\leq i<j\leq n$. Hence, we may select $\mathcal{E}$ as any subset of $\{\{i,j\}\,|\,1\leq i <j\leq n+1\}$ to define \eqref{ECSDP}. As will be illustrated in the numerical experiments in Section 5, \eqref{ECSDP} can still be much tighter than \eqref{CSDP} on the homogeneous cases even when the assumptions in Theorem \ref{thm8} are satisfied, if the set $\mathcal{E}$ is well selected.

\section{Further discussions on the enhanced semidefinite relaxation}

\eqref{ECSDP} can be further enhanced by introducing a new positive semidefinite constraint $R\succeq 0$. In this section, we define the following enhanced semidefinite relaxation to analyze the effects of introducing $R\succeq 0$ into \eqref{ECSDP}.

\begin{equation}\label{ECSDP2}\tag{ECSDP}
\begin{aligned}
\min~& Q_{0}\cdot X\\
\textrm{s.t.} ~&Q_{i}\cdot X \leq b_{i}, ~i=1, \ldots, m, \\
&l_i^2 \leq  R_{ii}=X_{ii} \leq u^2_i,~ i=1, \ldots, n, \\
& X_{ij} \in \textsf{Conv}( \mathcal{G}_{ij}(R_{ij})), ~\{i,j\}\in \mathcal{E},\\
&(R_{ii},R_{jj},R_{ij})~\textrm{satisfies \eqref{eq1} and \eqref{eq2}},~\{i,j\}\in \mathcal{E}, \\
&X\succeq 0,~R\succeq 0.
\end{aligned}
\end{equation}
Note that since the constraint $R\succeq 0$ implies the constraints $R_{ij}^2\leq R_{ii}R_{jj}$ for all $\{i,j\}\in \mathcal{E}$, the latter constraints can be ignored after adding $R\succeq 0$
into \eqref{ECSDP} to derive the formulation of \eqref{ECSDP2}. However, in the converse direction, since the constraints $R_{ij}^2\leq R_{ii}R_{jj}$ for all $\{i,j\}\in \mathcal{E}$ do not imply $R\succeq 0$, \eqref{ECSDP2} can be tighter than \eqref{ECSDP} for some instances of \eqref{CQP}. The following example demonstrates this claim.

Consider the following $3$-dimensional instance of \eqref{CQP}:
\begin{equation}
\begin{aligned}
\min~& x^\dag Q_{0} x\\
\textrm{s.t.} ~&1 \leq  |x_i| \leq 4,~ i=1,2,3, \\
& \arg(x_i^{\dag}x_j)\in [-\pi/6,\pi/6], ~\{i,j\}\in \mathcal{E},
\end{aligned}
\end{equation}
where
\begin{equation}
\begin{aligned}
Q_0=\begin{bmatrix}
-2 ~&-4 ~&0\\
-4 ~&2 ~&-2\\
0 ~&-2 ~&6\\
\end{bmatrix}+ \texttt{i} \cdot
\begin{bmatrix}
0 ~&-8 ~&1\\
8 ~&0 ~&-10\\
-1 ~&10 ~&0\\
\end{bmatrix}
\end{aligned}
\end{equation}
and $\mathcal{E}=\{\{i,j\}\,|\,1\leq i<j\leq 3\}$. A direct computation shows that the optimal values of \eqref{ECSDP} and \eqref{SDPChen} are both equal to $-248.39$, whereas the optimal value of \eqref{ECSDP2} is $-248.15$. The absolute difference between the optimal values of \eqref{ECSDP2} and \eqref{ECSDP} is $0.24$.
Hence, the proposed relaxation \eqref{ECSDP2} is strictly tighter than the semidefinite relaxations \eqref{ECSDP} and \eqref{SDPChen}.

When deriving the formulation \eqref{ECSDP2} from \eqref{ECSDP}, the second-order cone constraints of the form $R_{ii}R_{jj}\geq R^2_{ij}$ have been dropped. In fact, under certain conditions, more constraints can be dropped to simplify \eqref{ECSDP2}. We consider two such cases based on the next theorem.

\begin{theorem}\label{thm5}
For a pair of $\{i,j\}\in \mathcal{E}$, if $$0 \in \textsf{Conv} (\mathcal{G}_{ij}(R_{ij}))$$ for all $R_{ij}\geq 0$, then
\begin{equation} \label{thm5eq3}
\textsf{Conv} (\mathcal{G}_{ij}(R_{ij}))\subseteq \textsf{Conv} (\mathcal{G}_{ij}(R'_{ij}))
\end{equation}
holds for any $0\leq R_{ij} < R'_{ij}$.
\end{theorem}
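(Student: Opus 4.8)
The plan is to reduce the nesting claim to the elementary fact that a convex set containing the origin is star-shaped with respect to the origin, after exploiting the fact that $\mathcal{G}_{ij}(\cdot)$ scales linearly in its radius. The crucial structural observation is that every point of $\mathcal{G}_{ij}(R_{ij})$ has the form $R_{ij}e^{\texttt{i}\theta}$ with $\theta\in\mathcal{A}_{ij}$, so that $\mathcal{G}_{ij}(R_{ij})=R_{ij}\cdot\mathcal{G}_{ij}(1)$ for every $R_{ij}>0$ (and also at $R_{ij}=0$, since $\mathcal{A}_{ij}\neq\emptyset$). Writing $C:=\textsf{Conv}(\mathcal{G}_{ij}(1))$ and using that the convex hull commutes with the dilation $z\mapsto R_{ij}z$, I would first record the identity $\textsf{Conv}(\mathcal{G}_{ij}(R_{ij}))=R_{ij}\cdot C$ for all $R_{ij}\geq 0$.

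Next I would translate the hypothesis into a statement about $C$: for any $R_{ij}>0$, the assumption $0\in\textsf{Conv}(\mathcal{G}_{ij}(R_{ij}))=R_{ij}\cdot C$ is equivalent to $0\in C$, so the standing hypothesis reduces simply to $0\in C$, where $C$ is convex. From these two facts the theorem follows: given $0\le R_{ij}<R'_{ij}$ with $R_{ij}>0$, set $\lambda:=R_{ij}/R'_{ij}\in[0,1)$ and take any $z\in\textsf{Conv}(\mathcal{G}_{ij}(R_{ij}))=R_{ij}\cdot C$, say $z=R_{ij}w$ with $w\in C$. Then
$$\frac{z}{R'_{ij}}=\lambda w=\lambda w+(1-\lambda)\cdot 0$$
is a convex combination of $w\in C$ and $0\in C$, hence $z/R'_{ij}\in C$, i.e. $z\in R'_{ij}\cdot C=\textsf{Conv}(\mathcal{G}_{ij}(R'_{ij}))$. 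The remaining case $R_{ij}=0$ is immediate, since $\textsf{Conv}(\mathcal{G}_{ij}(0))=\{0\}$ and $0\in R'_{ij}\cdot C$.

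I do not expect a genuine obstacle; the only point needing care is the scaling identity $\textsf{Conv}(R_{ij}\cdot\mathcal{G}_{ij}(1))=R_{ij}\cdot\textsf{Conv}(\mathcal{G}_{ij}(1))$, which holds because dilation by a nonnegative scalar is linear and convex hulls commute with linear maps. As a concrete alternative that avoids this abstraction, one can argue directly from the explicit descriptions in Propositions~\ref{prop1} and \ref{prop2}: the defining inequalities there read $a_{ij}\,\mathtt{Re}(X_{ij})+b_{ij}\,\mathtt{Im}(X_{ij})\ge c_{ij}R_{ij}$ together with $|X_{ij}|\le R_{ij}$ (interval case), or $a_{ij}^t\,\mathtt{Re}(X_{ij})+b_{ij}^t\,\mathtt{Im}(X_{ij})\le c_{ij}^t R_{ij}$ for all $t$ (discrete case), and the condition $0\in\textsf{Conv}(\mathcal{G}_{ij}(R_{ij}))$ forces $c_{ij}\le 0$ (respectively $c_{ij}^t\ge 0$ for all $t$); the monotonicity of each right-hand side in $R_{ij}$ then yields the inclusion inequality by inequality. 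I would nevertheless present the scaling/star-shaped argument as the primary proof, since it handles both types of $\mathcal{A}_{ij}$ uniformly and makes transparent the geometric reason behind the statement.
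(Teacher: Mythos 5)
Your proof is correct and follows essentially the same route as the paper's: both rest on the positive homogeneity of $\mathcal{G}_{ij}(\cdot)$ in its radius together with the star-shapedness of a convex set containing the origin (the paper scales a convex combination of points of $\mathcal{G}_{ij}(R_{ij})$ up to $\mathcal{G}_{ij}(R'_{ij})$ and places $X_{ij}$ on the segment joining $0$ to the scaled point, which is exactly your $\lambda w + (1-\lambda)\cdot 0$ argument after normalizing by $C=\textsf{Conv}(\mathcal{G}_{ij}(1))$). Your packaging via the scaling identity is slightly cleaner but not a genuinely different argument.
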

\begin{proof}
For the case of $R_{ij}=0$, we have $\textsf{Conv} (\mathcal{G}_{ij}(R_{ij}))=\{0\}$. Then \eqref{thm5eq3} holds under the condition $0 \in \textsf{Conv} (\mathcal{G}_{ij}(R'_{ij}))$.
Now we consider the case of $R_{ij}> 0$. For any $X_{ij}\in \textsf{Conv} (\mathcal{G}_{ij}(R_{ij}))$, there exist a sequence of points
\begin{equation}
X^t_{ij}\in \mathcal{G}_{ij}(R_{ij}),~\lambda_t\geq 0,~t=1,\ldots,k,
\end{equation}
such that
\begin{equation}
X_{ij}=\sum_{t=1}^k \lambda_t X^t_{ij}~\textrm{and}~\sum_{i=1}^t \lambda_t=1.
\end{equation}
For any $R'_{ij}>R_{ij}$, by the definition of $\mathcal{G}_{ij}(R'_{ij})$, it is easy to check that
\begin{equation}\label{thm5eq2}
R'_{ij}X^t_{ij}/R_{ij}\in \mathcal{G}_{ij}(R'_{ij}).
\end{equation}
Then,
\begin{equation}\label{thm5eq1}
R'_{ij}X_{ij}/R_{ij}=\sum_{t=1}^k \lambda_t R'_{ij}X^t_{ij}/R_{ij}\in \textsf{Conv}(\mathcal{G}_{ij}(R'_{ij})).
\end{equation}
Equation \eqref{thm5eq1} and the condition $0\in \textsf{Conv}(\mathcal{G}_{ij}(R'_{ij}))$ together imply that
$X_{ij}\in \textsf{Conv}(\mathcal{G}_{ij}(R'_{ij}))$, since $X_{ij}$ is on the line segment which connects $0$ and $R'_{ij}X_{ij}/R_{ij}$ when $R'_{ij}>R_{ij}$.
This completes the proof.
\qed
\end{proof}

\begin{remark}
The condition $0 \in \textsf{Conv} (\mathcal{G}_{ij}(R_{ij}))$ holds for any $R_{ij}\geq 0$ if and only if there exists a $\tilde{R}_{ij}>0$ such that $0 \in \textsf{Conv} (\mathcal{G}_{ij}(\tilde{R}_{ij}))$.
\end{remark}

Based on Theorem \ref{thm5}, we can show that inequalities \eqref{eq1} and \eqref{eq2} are redundant in \eqref{ECSDP2} when $0 \in \textsf{Conv} (\mathcal{G}_{ij}(R_{ij}))$ for any $R_{ij}\geq 0$, in the sense that the projection of the feasible domain of \eqref{ECSDP2} onto the space of $X$ is not affected by these inequalities. More specifically, the inequalities in \eqref{eq1} and \eqref{eq2} constrain $R_{ij}$ from below, and the inequality $R_{ii}R_{jj}\geq R^2_{ij}$ constrains $R_{ij}$ from above, which provides an upper bound $\bar{R}_{ij}:=(R_{ii}R_{jj})^{1/2}$ of $R_{ij}$. Under the conditions in Theorem \ref{thm5}, the equation $$\bigcup_{R_{ij}\in[\underline{R}_{ij},\bar{R}_{ij}] }\textsf{Conv}(\mathcal{G}_{ij}(R_{ij}))= \textsf{Conv}(\mathcal{G}_{ij}(\bar{R}_{ij}))$$
holds for any $0\leq \underline{R}_{ij} \leq \bar{R}_{ij}$. Thus, dropping the inequalities \eqref{eq1} and \eqref{eq2}, only the lower bound of $R_{ij}$ is affected, but the upper bound $\bar{R}_{ij}$ does not change, so that the range of $X_{ij}$ is unchanged.
Based on the above discussions, we simplify \eqref{ECSDP2} for the following two cases.

\textbf{Case 1:} $\mathcal{A}_{ij}=\mathcal{A}^M$ for all $\{i,j\}\in \mathcal{E}$ where $M\geq 3$. In this case, we can see that $0 \in \textsf{Conv} (\mathcal{G}_{ij}(R_{ij}))$ holds for any $R_{ij}\geq 0$. Hence, based on Theorem \ref{thm5}, we can drop the constraints defined by inequalities \eqref{eq1} and \eqref{eq2} to simplify \eqref{ECSDP2}.

\textbf{Case 2:} $\mathcal{A}_{ij}=[\underline{\theta}_{ij},\bar{\theta}_{ij}]$ with $\pi\leq \bar{\theta}_{ij}-\underline{\theta}_{ij}<2\pi$ for all $\{i,j\}\in \mathcal{E}$.
In this case, we also have $0\in \textsf{Conv} (\mathcal{G}_{ij}(R_{ij}))$. Then, based on Theorem \ref{thm5}, constraints \eqref{eq1} and \eqref{eq2} can be dropped.
In addition, based on Theorem \ref{thm5} again, the constraints $|X_{ij}|\leq R_{ij}$ for $\{i,j\}\in \mathcal{E}$ can be dropped. In detail, let $(\hat{X},\hat{R})$ be the optimal
solution to \eqref{ECSDP2} with constraints \eqref{eq1}, \eqref{eq2} and constraints $|X_{ij}|\leq R_{ij}$ for $\{i,j\}\in \mathcal{E}$ being dropped. Since
the inequalities $|\hat{X}_{ij}|\leq (\hat{X}_{ii}\hat{X}_{jj})^{1/2}$ holds automatically under the constraint $\hat{X}\succeq 0$, we can construct another solution
$(\hat{X},R')$ by assigning the entries of $R'$ to $R'_{ij}=(X_{ii}X_{jj})^{1/2}$. Then, it is easy to check $(\hat{X},R')$ is feasible to \eqref{ECSDP2} in its original version.
Hence, the constraints $|X_{ij}|\leq R_{ij}$ for $\{i,j\}\in \mathcal{E}$ can be dropped without affecting the optimal value of \eqref{ECSDP2}.

In our numerical experiment, in order to reduce the computational complexity, we always simplify \eqref{ECSDP2} according to the types of the phase angle constraints for a variety of \eqref{CQP} arising in different application backgrounds.

\section{Numerical results}
We carry out numerical experiments to investigate the tightness of the proposed semidefinite relaxation \eqref{ECSDP2}. Our test instances are randomly generated to simulate some practical applications in signal processing. The experiments are carried out on a personal computer with Intel Core(TM) i7-9700 CPU (3.00 GHz) and 16 GB RAM. We use Mosek (Ver 9.2) \cite{mosek} to solve all semidefinite relaxations. All algorithms are implemented in Matlab R2017a. For all the three problems discussed in the following subsections, the set $\mathcal{E}$ is always set to $\mathcal{E}=\{\{i,j\}\,|\,1\leq i<j\leq n\}$.

\subsection{Phase quantized waveform design}

We consider the phase quantized waveform design problem with constraints on peak to average ratio (ref. \cite{Maio2011}). The problem can be formulated as follows:
\begin{equation}\label{radar}
\begin{aligned}
\max~&x^{\dag} Q x\\
\textrm{s.t.} ~&x^{\dag} x =n,\\
& {|}x_i {|}^2 \leq \gamma, ~ i=1, \ldots, n, \\
&\arg(x_i) \in \mathcal{A}^M,~ i=1, \ldots, n,
\end{aligned}
\end{equation}
where the meaning of parameters $Q,~n,~\gamma,~M$ and the decision variables $x_1,\ldots,x_n$ are described in \cite{Maio2011}.
Based on the discussions in Section 4, relaxation \eqref{ECSDP2} for \eqref{radar} can be simplified as follows:
\begin{equation}\label{ECSDPradar}
\begin{aligned}
\max~& Q\cdot X\\
\textrm{s.t.} ~&\texttt{Trace}(X)=n,\\
&X_{ii}=R_{ii} \leq \gamma, ~i=1, \ldots, n, \\
& X_{ij} \in \textsf{Conv} (\mathcal{G}_{ij}(R_{ij})), ~\{i,j\}\in \mathcal{E},\\
&X\succeq 0,~R\succeq 0,
\end{aligned}
\end{equation}
where the constraint $X_{ij} \in \textsf{Conv} (\mathcal{G}_{ij}(R_{ij}))$ is described in \eqref{thm2eq3}. Besides, the classical semidefinite relaxation \eqref{CSDP} of \eqref{radar} can be obtained by dropping the constraint $X_{ij} \in \textsf{Conv} (\mathcal{G}_{ij}(R_{ij}))$ for $\{i,j\}\in \mathcal{E}$ in \eqref{ECSDPradar}.
In \cite{Maio2011}, Maio et al. have applied \eqref{CSDP} to design an approximation algorithm.

In our experiments, we compare \eqref{ECSDP2} with \eqref{CSDP} from two aspects: First, we investigate whether \eqref{ECSDP2} can be much tighter than \eqref{CSDP}. Second, we investigate whether the approximation algorithm proposed in \cite{Maio2011} can be improved to find better sub-optimal solutions by using \eqref{ECSDP2} rather than \eqref{CSDP}.

The numerical experiments are carried out as follows: we use ten randomly generated test instances, in each of which the matrix $Q\in \mathbb{C}^{n\times n}$ is generated using the procedures in \cite{Soltanalian}: $Q=\sum_{k=1}^n u_k u_k^\dag$, where $u_k$, $k=1,\ldots,n$, is a random vector in $\mathbb{C}^{n}$ whose real-part and imaginary-part elements are independently sampled from the standard Gaussian distribution. The parameter $\gamma$ is set to $1.2$. We consider the cases of $n=20$ and  $M\in\{3,6\}$. For each test instance, we solve the relaxations \eqref{CSDP} and \eqref{ECSDP2} to estimate upper bounds (UB in short) for \eqref{radar}. Based on the optimal solutions of the two relaxations, we run the approximation algorithm proposed in \cite{Maio2011} to obtain sub-optimal solutions, whose objective values provide lower bounds (LB in short) for \eqref{radar}. The lower and uppder bounds obtained from the two relaxations are listed in Table \ref{table1}, along with the computational time (in seconds) for solving different semidefinite relaxations. Finally, we define the ``Gap Closed" as\footnote{In the literature, the term ``gap" means the difference between the optimal value of an optimization problem and its lower/upper bound. Here we borrow the term ``gap" for convenience, but the meaning is different.}
\begin{equation}
\textrm{Gap Closed}=1-\frac{\textrm{UB}_{\textrm{ECSDP}}-\textrm{LB}_{\textrm{ECSDP}}}{\textrm{UB}_{\textrm{CSDP}}-\textrm{LB}_{\textrm{CSDP}}},
\end{equation}
where $\textrm{LB}_{(\bullet)}$ and $\textrm{UB}_{(\bullet)}$ denote the lower and upper bounds obtained from the corresponding relaxation method, respectively.
The results of Gap Closed are listed in the last column of Table \ref{table1}.

\begin{table}[ht]
\begin{center}
\begin{minipage}{\textwidth}
\caption{Computational results on instances of Problem \eqref{radar}.} \label{table1}
\begin{tabular*}{\textwidth}{@{\extracolsep{\fill}}ccrrrrrrr@{\extracolsep{\fill}}}
\toprule%
\multicolumn{2}{@{}c@{}}{Instance} & \multicolumn{3}{@{}c@{}}{CSDP} & \multicolumn{3}{@{}c@{}}{ECSDP}  &Gap \\ \cmidrule{3-5}\cmidrule{6-8}
ID &$(n,M)$ &\rm{UB} &\rm{LB}  &\rm{Time}  &\rm{UB} &\rm{LB}  &\rm{Time} &Closed \\
\midrule
1 &(20,3) &1094.58 &1044.10 &0.16 &1071.85 &1050.14 &0.31 &57\% \\
2 &(20,3) &1125.75 &1063.21 &0.17 &1078.46 &1071.90 &0.32 &90\% \\
3 &(20,3) &1137.79 &1079.27 &0.16 &1103.90 &1081.97 &0.32 &63\% \\
4 &(20,3) &1073.05 &1015.17 &0.17 &1049.16 &1020.37 &0.32 &50\% \\
5 &(20,3) &1134.78 &1040.75 &0.16 &1084.89 &1057.29 &0.31 &71\% \\
6 &(20,6) &1130.19 &1025.88 &0.16 &1124.07 &1031.64 &0.59 &11\% \\
7 &(20,6) &1115.55 &1033.44 &0.17 &1112.86 &1040.41 &0.63 &12\% \\
8 &(20,6) &1106.84 &1020.79 &0.17 &1103.74 &1040.53 &0.65 &27\% \\
9 &(20,6) &1126.50 &1024.32 &0.17 &1113.95 &1028.55 &0.62 &16\% \\
10 &(20,6) &1122.23 &1045.24 &0.17 &1110.84 &1048.37 &0.60 &19\% \\
\bottomrule
\end{tabular*}
\end{minipage}
\end{center}
\end{table}

From the results listed in Table \ref{table1}, we can see that the upper bounds provided by \eqref{ECSDP2} are consistently tighter than the bounds of \eqref{CSDP} on all test instances.
For the case of $M=3$, the improvement on closing gaps is significant. In fact, the set $\mathcal{A}^M$ can be regarded as a discrete
approximation of the set $[0,2\pi]$. Hence, for the case that $M$ is small, the constraint $X_{ij} \in \textsf{Conv} (\mathcal{G}_{ij}(R_{ij}))$ can be more
effective for reducing the gap. As $M$ becomes larger, the difference between the optimal values of \eqref{ECSDP2} and \eqref{CSDP} tends to zero. That is
why the tightness of the upper bounds can be improved more significantly on the case of $M=3$ than on the case of $M=6$.

On the other hand, for all the ten test instances, the lower bounds obtained by using \eqref{ECSDP2} are larger than the lower bounds obtained by using \eqref{CSDP}.
These results imply that by using \eqref{ECSDP2} for the approximation algorithm, better sub-optimal solutions can be found.

The improvement on the upper and lower bounds are not obtained for free. As we may observe that the computational time for solving \eqref{ECSDP2} is longer than the one for \eqref{CSDP}. This is reasonable since the number of variables and the number of constraints in \eqref{ECSDP2} are both larger than those of \eqref{CSDP}.

In conclusion, using \eqref{ECSDP2} to replace \eqref{CSDP} for problem \eqref{radar}, we may obtain better upper bounds and better sub-optimal solutions, with the cost of higher computation complexity. Moreover, we would like to mention that problem \eqref{radar} is a homogeneous case of \eqref{CQP} that satisfies the assumptions in Theorem \ref{thm8}, hence the semidefinite relaxation proposed in \cite{Lu2020} can not be tighter than \eqref{CSDP}.

\subsection{Discrete transmit beamforming}
In this subsection, we consider the discrete transmit beamforming problem, which can be formulated as follows (ref. \cite{Demir2015}):
\begin{equation}\label{beamforming}
\begin{aligned}
\max_{t,x}~&t\\
\textrm{s.t.} ~&x^{\dag} Q_{i} x \geq t \gamma_k \sigma_k^2, ~i=1,\ldots,k,\\
&x^{\dag} x\leq  P_{\textrm{tot}},\\
& {|}x_i {|} \in \{\Delta,2\Delta,\ldots, 2^m \Delta \}, ~ i=1, \ldots, n, \\
&\arg(x_i) \in \mathcal{A}^M, i=1, \ldots, n,
\end{aligned}
\end{equation}
where $Q_i=h_i h_i^\dag$ with $h_i\in \mathbb{C}^n$ being the channel vector for each $i=1,\ldots,k$, $n$ is the number of  transmit antennas, $k$ is the number of receivers, $m$ is the number of bits to represent the discrete amplitude values, $P_{\textrm{tot}}$ denotes the maximum total power, $\Delta=\sqrt{P_{\textrm{max}}}/2^m$ where $P_{\textrm{max}}$ is the maximum per-antenna power, and $M=2^b$ where $b$ is the number of bits to represent the discrete amplitude values.

If we relax the discrete constraint ${|}x_i {|} \in \{\Delta,2\Delta,\ldots, 2^m \Delta \}$ to $\Delta\leq {|}x_i {|}  \leq 2^m \Delta$, then we have the  relaxation \eqref{CSDP} for \eqref{beamforming} as follows:
\begin{equation}
\begin{aligned}
\max~&t\\
\textrm{s.t.} ~& Q_{i} \cdot X \geq t \gamma_k \sigma_k^2, ~i=1,\ldots,k,\\
&\texttt{Trace}(X)\leq  P_{\textrm{tot}},\\
& \Delta^2\leq X_{ii}\leq P_{\textrm{max}},  i=1, \ldots, n, \\
& X\succeq 0.
\end{aligned}
\end{equation}
On the other hand, relaxation \eqref{ECSDP2} for \eqref{beamforming} is formulated as follows:
\begin{equation}
\begin{aligned}
\max~&t\\
\textrm{s.t.} ~& Q_{i} \cdot X \geq t \gamma_k \sigma_k^2, ~i=1,\ldots,k,\\
&\texttt{Trace}(X)\leq  P_{\textrm{tot}},\\
& \Delta^2\leq R_{ii}=X_{ii}\leq P_{\textrm{max}},  i=1, \ldots, n, \\
& X_{ij} \in \textsf{Conv} (\mathcal{G}_{ij}(R_{ij})), ~\{i,j\}\in \mathcal{E},\\
& X\succeq 0,~R\succeq 0.
\end{aligned}
\end{equation}

We compare the performance of the two relaxations on instances that are randomly generated according to the procedure in \cite{Demir2015}: For each $i=1,\ldots,k$, the vector $h_i\in \mathbb{C}^n$ follows the standard $n$-dimensional complex Gaussian distribution, $Q_i=h_i h_i^\dag$, $\gamma_k$ is uniformly selected from $\{1,2,3,4\}$, and $\sigma_k^2=1.0$. {Also based on the parameter settings in \cite{Demir2015}, we set $n=4$, $P_{\textrm{max}}=20$, $P_{\textrm{tot}}=40$, and consider the case of $k\in\{4,8,12\}$, $m\in\{3,4\}$, and $b\in\{3,4\}$.

Based on the procedures described above, for each setting of parameters $(n,k,m,b)$, we generate ten test instances. Each instance is relaxed to \eqref{ECSDP2} and \eqref{CSDP}, respectively.
Moreover, the optimal value of each instance is computed by CPLEX, using the integer programming formulation proposed in \cite{Demir2015}. With a known optimal value for each test instance, the ``Gap Closed'' is defined as
\begin{equation}
\textrm{Gap Closed}=1-\frac{\textrm{UB}_{\textrm{ECSDP}}-V^\ast}{\textrm{UB}_{\textrm{CSDP}}-V^\ast},
\end{equation}
where $V^\ast$ denotes the optimal value returned by CPLEX. The numerical results are listed in Table \ref{table2}. For each row, the listed results summarize the average performance on the ten test instances.

\begin{table}[h]
\begin{center}
\begin{minipage}{\textwidth}
\centering
\caption{Computational results on discrete transmit beamforming problems.}\label{table2}
\begin{tabular}{ccrrrrr}
\toprule%
\multirow{2}{*}{$(n,k,m,b)$} &\multicolumn{2}{c}{ECSDP} &\multicolumn{2}{c}{CSDP} &\multirow{2}{*}{Optimal Value} &\multirow{2}{*}{Gap Closed}\\
\cmidrule(r){2-3} \cmidrule(r){4-5}
&Upper Bound  &Time &Upper Bound &Time \\
\midrule
(4,4,3,3) &   38.7921   &  0.010  &  39.8886  &  0.005  &  36.3148 &  30.16\% \\
(4,8,3,3) &   36.7337   &  0.010  &  37.2159  &  0.005  &  32.1290 &   9.72\% \\
(4,12,3,3) &  25.9584   &  0.011  &  26.2780  &  0.005  &  21.1069 &  10.34\% \\
(4,4,4,4) &   39.3492   &  0.017  &  39.8886  &  0.005  &  38.3498 &  31.58\% \\
(4,8,4,4) &   36.9279   &  0.017  &  37.2159  &  0.005  &  34.1607 &  10.67\% \\
(4,12,4,4) &   26.0792   &  0.017  &  26.2780  &  0.005  &  22.7450 &  10.61\% \\
\bottomrule
\end{tabular}
\footnotetext{Note: The results in each row are averaged over the ten test instances.}
\end{minipage}
\end{center}
\end{table}

From the results in Table \ref{table2}, we can see that relaxation \eqref{ECSDP2} is consistently tighter than \eqref{CSDP} for all settings of parameters. The proposed valid inequalities exploited from the convex hull of $\mathcal{G}_{ij}(R_{ij})$ and $\mathcal{H}_{ij}$ are effective for improving the tightness of the semidefinite relaxation, which reduce 9.7\%--30\% of the relaxation gap introduced by \eqref{CSDP} on average.

}

\subsection{Continuous phase angle constraints}

We consider \eqref{CQP} with continuous phase angle constraints. As discussed in Section 4, when $\mathcal{A}_{ij}=[\underline{\theta}_{ij},\bar{\theta}_{ij}]$ with $-\pi/2<\underline{\theta}_{ij}<\bar{\theta}_{ij}<\pi/2$ for all $\{i,j\}\in \mathcal{E}$, \eqref{ECSDP} is equivalent to \eqref{SDPChen}. However, it has been shown that \eqref{ECSDP2} can be tighter than \eqref{ECSDP}. In this subsection, we compare the tightness of these different relaxations numerically. For this purpose, we generate test instances of the following form:

\begin{equation}
\begin{aligned}
\min~&x^{\dag} Q_{0} x\\
&l_i \leq {|}x_i {|} \leq u_i,~  i=1, \ldots, n, \\
&\arg(x_i{x_j}^{\dag}) \in [\underline{\theta}_{ij},\bar{\theta}_{ij}], \{i,j\}\in \mathcal{E},
\end{aligned}
\end{equation}
where $l_i=1.0$, $u_i=4.0$ for all $i=1,\ldots,n$, and $Q_0\in \mathbb{C}^n$ is a randomly generated Hermitian matrix, with each diagonal entry following the standard Gaussian distribution, and with the real-part and the imaginary-part of each non-diagonal entry also following the standard Gaussian distribution.

We first generate ten 20-dimensional test instances with $\mathcal{A}_{ij}=[-\pi/6,\pi/6]$ for all $\{i,j\}\in \mathcal{E}$, and compare the three relaxations \eqref{CSDP}, \eqref{SDPChen} and \eqref{ECSDP2}. Since \eqref{ECSDP} is equivalent to \eqref{SDPChen} but is not as compact as \eqref{SDPChen}, it is not selected for the current comparison. The computational results for the three selected semidefinite relaxations are listed in Table \ref{table4}.

\begin{table}[h]
\begin{center}
\begin{minipage}{\textwidth}
\caption{Computational results on instances with continuous phase angle set.}\label{table4}
\begin{tabular*}{\textwidth}{@{\extracolsep{\fill}}lrrrrrr@{\extracolsep{\fill}}}
\toprule%
Instance & \multicolumn{3}{@{}c@{}}{Lower Bound} & \multicolumn{3}{@{}c@{}}{Time} \\ \cmidrule{2-4}\cmidrule{5-7}
ID  &\rm{CSDP} &\rm{ECSDP2} &\rm{ECSDP}   &\rm{CSDP} &\rm{ECSDP2} &\rm{ECSDP} \\
\midrule
1 &-2965.66 &-1120.93 &-1116.66 &0.02 &0.20 &0.32 \\
2 &-2895.56 &-593.77 &-592.48 &0.01 &0.16 &0.26 \\
3 &-2567.41 &-1168.01 &-1167.79 &0.01 &0.18 &0.38 \\
4 &-3062.55 &-622.25 &-618.21 &0.01 &0.17 &0.24 \\
5 &-3469.90 &-896.06 &-893.29 &0.01 &0.16 &0.27 \\
6 &-3069.32 &-1026.03 &-1025.35 &0.01 &0.19 &0.27 \\
7 &-2779.32 &-975.31 &-969.00 &0.01 &0.17 &0.25 \\
8 &-2658.00 &-542.72 &-539.58 &0.01 &0.15 &0.24 \\
9 &-3381.43 &-1225.68 &-1221.91 &0.01 &0.18 &0.30 \\
10 &-3087.94 &-1096.40 &-1092.99 &0.01 &0.17 &0.25 \\
\bottomrule
\end{tabular*}
\end{minipage}
\end{center}
\end{table}

From the results in Table \ref{table4}, we can observe that both \eqref{ECSDP2} and \eqref{SDPChen} are much tighter than \eqref{CSDP}. Meanwhile, the bounds of \eqref{ECSDP2} are uniformly tighter than the bounds of \eqref{SDPChen} (although the improvements are marginal). Based on the above results, we may conclude that in comparison with the previous semidefinite relaxations in the literatures, \eqref{ECSDP2} is the tightest one.

We further consider the case that $\pi< \bar{\theta}_{ij}-\underline{\theta}_{ij}<2\pi$, for which the semidefinite relaxation \eqref{SDPChen} proposed in \cite{Chen2016} can not be applied. We investigate whether \eqref{ECSDP2} can still be tighter than \eqref{CSDP} in this case. To do so, we generate another ten test instances, in which each $\underline{\theta}_{ij}$ is uniformly sampled from $(-\pi,-\pi/2)$, and $\bar{\theta}_{ij}=\underline{\theta}_{ij}+\varphi_{ij}$, where $\varphi_{ij}$ is uniformly sampled from $(\pi,2\pi)$. Then, the \eqref{ECSDP2} and \eqref{CSDP} based lower bounds are computed and listed in Table \ref{table3}.

\begin{table}[h]
\begin{center}
\begin{minipage}{\textwidth}
\caption{Computational results on instances with continuous phase angle set.}\label{table3}
\begin{tabular*}{\textwidth}{@{\extracolsep{\fill}}lrrrrrrr@{\extracolsep{\fill}}}
\toprule%
Instance & \multicolumn{2}{@{}c@{}}{Lower Bound} & \multicolumn{2}{@{}c@{}}{Time} \\ \cmidrule{2-3}\cmidrule{4-5}
ID  &\rm{CSDP} &\rm{ECSDP}  &\rm{CSDP}  &\rm{ECSDP} \\
\midrule
1 &-2984.13 &-2741.61 &0.02 &0.07  \\
2 &-3080.58 &-2798.36 &0.01 &0.04  \\
3 &-3170.00 &-2972.45 &0.01 &0.04  \\
4 &-3630.20 &-3414.64 &0.01 &0.04  \\
5 &-3062.33 &-2866.65 &0.01 &0.04  \\
6 &-3225.01 &-2945.75 &0.01 &0.04  \\
7 &-2760.60 &-2513.31 &0.01 &0.04  \\
8 &-3174.43 &-2892.12 &0.01 &0.04  \\
9 &-3032.56 &-2909.40 &0.01 &0.04  \\
10 &-3083.04 &-2827.94 &0.01 &0.04  \\
\bottomrule
\end{tabular*}
\end{minipage}
\end{center}
\end{table}

From the results in Table \ref{table3}, we can find that even for the case that $\pi< \bar{\theta}_{ij}-\underline{\theta}_{ij}<2\pi$, \eqref{ECSDP2} can be tighter than \eqref{CSDP}. These results indicate that the new semidefinite relaxation \eqref{ECSDP2} can be applied to more general cases of \eqref{CQP}, whereas the previous one in \cite{Chen2016} is specially designed for the case of $-\pi/2<\underline{\theta}_{ij}<\bar{\theta}_{ij}<\pi/2$.

\section{Conclusions}

In this paper, we propose some new semidefinite relaxations for a class of complex quadratic programming problems. The main idea behind the proposed semidefinite relaxations is that the convex hull of $(X_{ii},X_{jj},X_{ij})$ is exploited to derive valid constraints for the lifted matrix $X=xx^\dag$, which are very effective for tightening the conventional semidefinite relaxation. The main technique to derive the valid constraints is to represent the entry $X_{ij}$ in its polar coordinate form, so that the convex hull of variables in the polar coordinate representations can be derived easily. Our numerical results show that the proposed semidefinite relaxation \eqref{ECSDP2} achieves better performance than the conventional ones in terms of tightness. Besides, using the new semidefinite relaxations, some previous approximation algorithms can be improved for finding better sub-optimal solutions.

The proposed semidefinite relaxations are theoretically compared with the previous semidefinite relaxations proposed in \cite{Lu2018,Lu2020} and our proposed \eqref{ECSDP2} can be tighter than the previous ones. In particular, for the homogeneous cases that satisfy the assumptions in Theorem \ref{thm8}, the semidefinite relaxations proposed in \cite{Lu2018,Lu2020} is equivalent to \eqref{CSDP}, whereas the newly proposed \eqref{ECSDP2} is tighter than \eqref{CSDP}.
Moreover, the proposed semidefinite relaxations are also compared with the one proposed in \cite{Chen2016}. As discussed in Section 4, the semidefinite relaxation proposed in \cite{Chen2016}
is designed for the case where each $\mathcal{A}_{ij}$ is a sub-interval of $(-\pi/2,\pi/2)$. In this case, the newly proposed semidefinite relaxation \eqref{ECSDP} is equivalent to the one proposed in \cite{Chen2016}, whereas \eqref{ECSDP2} can be strictly tighter than the one in \cite{Chen2016}. Moreover, \eqref{ECSDP} and \eqref{ECSDP2} can be applied to general cases of \eqref{CQP}, and keep their effectiveness even for cases where $\mathcal{A}_{ij}=[\underline{\theta}_{ij},\bar{\theta}_{ij}]$ with $\pi<\bar{\theta}_{ij}-\underline{\theta}_{ij}<2\pi$.

Based on the results presented in this paper, there are two potential directions that deserve further research. First, for the discrete case, such as the radar waveform design problem discussed in Section 5.1, whether the theoretical approximation ratio of \eqref{ECSDP2} based approximation algorithm can be better than the ratio of \eqref{CSDP} based algorithm is an interesting question to answer. Second, since \eqref{ECSDP2} can be tighter than the semidefinite relaxation proposed in \cite{Chen2016}, we may try to design a new branch-and-bound algorithm to solve the optimal power flow problem, using \eqref{ECSDP2} as a relaxation method. It is interesting to design a new branch-and-bound algorithm to compare with the one in \cite{Chen2016}.


%
%



\end{document}